\newtheorem{theorem}{Theorem}
\newtheorem{lemma}[theorem]{Lemma}
\newtheorem{corollary}[theorem]{Corollary}
\newtheorem{proposition}[theorem]{Proposition}
\newtheorem{definition}[theorem]{Definition}
\newtheorem{claim}[theorem]{Claim}
\title{Explicit incidence bounds over general finite fields}
\author{Timothy G. F. Jones \footnote{Department of Mathematics, University of Bristol, BS8 1TW, United Kingdom, tgf.jones@bristol.ac.uk}}
\date{}
\begin{document}
\maketitle
\begin{abstract}
Let $\mathbb{F}_{q}$ be a finite field of order $q=p^k$ where $p$ is prime. Let $P$ and $L$ be sets of points and lines respectively in $\mathbb{F}_{q} \times \mathbb{F}_{q}$ with $|P|=|L|=n$. We establish the incidence bound $I(P,L) \leq \gamma n^{\frac{3}{2}-\frac{1}{12838}}$, where $\gamma$ is an absolute constant, so long as $P$ satisfies the conditions of being an `antifield'. We define this to mean that the projection of $P$ onto some coordinate axis has no more than half-dimensional interaction with large subfields of $\mathbb{F}_q$. In addition, we give examples of sets satisfying these conditions in the important cases $q=p^2$ and $q=p^4$.
\end{abstract}

\section*{Preliminary notation}
This paper uses the following notation throughout. Given two real-valued functions $f,g$ with domain $D$, we write 

\begin{itemize}
\item $f\ll g$, $f=O(g)$ or $g = \Omega(f)$ if there is a constant $\gamma$ such that $f(x) \leq \gamma g(x)$ for all $x \in D$. The implicit constant $\gamma$ may be different each time this notation is used.
\item $f \approx g$ if $f \ll g$ and $g \ll f$
\end{itemize}

Given two sets $A,B \subseteq \mathbb{F}_q$, we define:

\begin{itemize}
\item the \textbf{sumset} $A+B=\left\{a+b:a \in A, b \in B\right\}$
\item the \textbf{product set} $A \cdot B=\left\{ab:a \in A, b \in B\right\}$
\item the \textbf{ratio set} $\frac{A}{B}=\left\{a b^{-1}:a \in A, b \in B, b \neq 0\right\}$
\end{itemize}

\section{Introduction}
\subsection{Incidences}
This paper is about incidences between points and lines in a plane. A point is \textbf{incident} to a line if it lies on that line, and a single point can be incident to more than one line if they cross at that point. An established problem is to find upper bounds for the number of incidences between finite sets of points and lines of given cardinality. 

Specifically, fix a field $F$ and an integer $n$, and let $P$ and $L$ be finite sets of points and lines respectively in the plane $F \times F$ with $\left|P\right|$=$\left|L\right|=n$. Define $$I(P,L)=\left|\left\{(p,l)\in P \times L:p \in l\right\}\right|$$ to be the cardinality of the set of incidences between $P$ and $L$. The problem is to establish upper bounds on $I(P,L)$. A straightforward exercise in combinatorics \cite{TV} shows that one always has $I(P,L)\ll n^{\frac{3}{2}}$. So non-trivial incidence bounds are those of the form $I(P,L)\ll n^{\frac{3}{2}-\epsilon}$ for positive $\epsilon$. 

\subsection{Known bounds}

Different bounds are known for different choices of the field $F$. Things are largely settled in the settings $F=\mathbb{R}$ and $F=\mathbb{C}$. The result $\epsilon = 1/6$ was obtained in these settings, by Szem\'eredi and Trotter \cite{ST} and T\'oth \cite{toth} respectively. In both cases, the bound holds unconditionally and is sharp up to multiplicative constants.

Much less is known in the finite field setting $F=\mathbb{F}_q$. It is certainly not possible to have a non-trivial bound that holds in all cases, as the trivial bound $I(P,L)\approx n^{\frac{3}{2}}$ is achieved when $P=F \times F$ and $L$ is the set of lines determined by pairs of points in $P$. So one must impose some extra condition on $P$. 

When $F=\mathbb{F}_p$ is a finite field of prime order this can be simply a cardinality condition. The best-known result in this setting, due to Helfgott and Rudnev \cite{HR}, requires simply that $n$ is strictly less than $p$, and guarantees that $\epsilon\geq 1/10678$ when this condition is satisfied. This result is unlikely to be best-possible, and followed work of Bourgain, Katz and Tao \cite{BKT} which established the existence of a non-trivial $\epsilon>0$ so long as $n<p^{2-\delta(\epsilon)}$, but did not quantify it.

\subsection{Bounds over general finite fields}

The Helfgott-Rudnev bound is known only in $\mathbb{F}_p$, and so one would like to extend it to general (i.e. not necessarily prime) finite fields $\mathbb{F}_q$. In particular, it would be good to extend to $\mathbb{F}_{p^2}$, as this is the finite analogue of $\mathbb{C}$. However, general finite fields can have subfields, and so stronger conditions than just cardinality are required on $P$. This is because, as with the example above, if $K$ is a subfield of $F$ then the trivial bound $I(P,L)\approx n^{\frac{3}{2}}$ can be achieved when $P$ is the subplane $K \times K$.

It is therefore an interesting problem to find conditions on $P \subseteq \mathbb{F}_q \times \mathbb{F}_q$ for which an explicit Helfgott-Rudnev-type bound holds for any $L$ with $|L|=|P|$. Progress on this problem sheds light on the relationship between the algebraic structure of fields and the geometric structure of incidences. Ultimately one would like to find an algebraic condition for $P$ that is both necessary and sufficient for an explicit incidence bound.

The natural condition to try imposing on $P$ would be to insist that it is `not too close' to being a copy of a subplane, for example by ensuring that its projection onto one of either the $x$- or $y$-axis is `not too close' to a copy of a subfield. However, the currently-known approaches for proving Helfgott-Rudnev-type bounds rely on first applying a projective transformation to $P$, which could disrupt such a condition. So any condition must, additionally, be preserved by projective transformation. 

\subsection{Results}

We present an incidence result in $\mathbb{F}_q$, which holds so long as $P$ satisfies certain conditions. Informally, these are that the projection $A(P)$ of $P$ onto some co-ordinate axis has no more than `half-dimensional interaction' with `large' subfields $G$ of $\mathbb{F}_q$, where `large' will be defined relative to the cardinality $n=|P|$. 

By no more than `half dimensional interaction', we mean that $A(P)$ does not intersect an affine copy of $G$ in more than $|G|^{1/2}$ places, and intersects no more than $|G|^{1/2}$ distinct translates of $G$. Since the motivation is that such sets are a long way from being fields, we shall call them `antifields' and `strong antifields'.

\begin{definition}[Antifields]
Let $F$ be a field and $\lambda>0$.
\begin{enumerate}
\item 
Let $A \subseteq F$. Then
\begin{enumerate}
\item
$A$ is a $\mathbf{(1,\lambda)}$-\textbf{antifield} if $\left|A \cap (aG+b)\right|\leq \max\left\{\lambda,|G|^{\frac{1}{2}}\right\}$ for all subfields $G$ of $F$ and all $a,b \in F$.
\item
$A$ is a $\mathbf{(1,\lambda)}$-\textbf{strong-antifield} if it is a $(1,\lambda)$-antifield and, for every subfield $G$ with $|G|\geq \lambda$, it intersects strictly fewer than $\max\left\{\lambda,|G|^{\frac{1}{2}}\right\}/2$ distinct translates $G+b$ of $G$.
\end{enumerate}
\item Let $P \subset F \times F$. Then 
\begin{enumerate}
\item $P$ is a $\mathbf{(2,\lambda)}$\textbf{-antifield} if the set $\left\{x:(x,y) \in P\right\}$ is a $(1,\lambda)$-antifield
\item $P$ is a $\mathbf{(2,\lambda)}$\textbf{-strong-antifield} if the set $\left\{x:(x,y) \in P\right\}$ is a $(1,\lambda)$-strong-antifield
\end{enumerate}
\end{enumerate}
\end{definition}

Note that since one can always apply a change of basis, the projection can in fact be onto any vector multiple of $\mathbb{F}_q$. 

Parts $1.(a)$ and $2.(a)$ of the definition are motivated by work of Katz and Shen \cite{KS} generalising sum-product bounds in $\mathbb{F}_p$ to $\mathbb{F}_q$. Parts $1.(b)$ and $2.(b)$ are motivated by the need to avoid disruption by projective transformations. A key idea, which shall be seen later, is that certain projective images of a strong antifield will always be antifields.

We are now able to state the result:

\begin{theorem}\label{theorem:result}
There is an absolute constant $\gamma$ such that if $F$ is a finite field, $P$ and $L$ are sets of points and lines respectively in $F \times F$ with $|P|=|L|=n$, and $P$ is additionally a $\left(2,\gamma n^{\frac{2560}{6419}}\right)$-strong-antifield, then $I(P,L) \ll n^{\frac{3}{2}-\frac{1}{12838}}$.   
\end{theorem}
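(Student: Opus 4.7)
The plan is to adapt the Helfgott--Rudnev argument \cite{HR} for $\mathbb{F}_p$ to general $\mathbb{F}_q$, substituting the Katz--Shen sum-product estimate \cite{KS} for antifields in the role played in $\mathbb{F}_p$ by the Bourgain--Katz--Tao sum-product estimate. The cardinality restriction $n<p$ used in Helfgott--Rudnev is replaced by the strong-antifield hypothesis on $P$, whose two clauses are there respectively to supply the Katz--Shen input and to survive the projective step below. I would argue by contradiction, assuming $I(P,L)\geq C n^{3/2-\epsilon}$ with $\epsilon=1/12838$ and $C$ a large absolute constant.

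First, by dyadic pigeonholing on the incidence multiplicities I would refine to a subfamily of lines each supporting roughly $k\approx n^{1/2-\epsilon}$ incidences, and then locate a rich point $p_0$ meeting $\approx k$ of these lines. Next I would apply a projective transformation sending $p_0$ together with two other rich auxiliary points to a canonical configuration (for instance, placing $k$ concurrent rich lines parallel to a coordinate axis). This induces a birational action on the $x$-projection $A(P)$ of $P$, and I would need to verify that the image remains a $(1,\lambda)$-antifield. This is exactly where clause $1.(b)$ of the antifield definition is used: a projective transformation can concentrate $A(P)$ onto an affine copy of a subfield $G$ only if $A(P)$ already either intersects some $aG+b$ in many elements or meets many distinct translates of $G$, both of which are forbidden by the strong-antifield hypothesis.

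Third, the post-transformation configuration produces many relations of the form $(x_1-x_2)(x_3-x_4)=(x_5-x_6)(x_7-x_8)$ among elements of the (transformed) projection $A$, coming from cross-ratios of rich collinear quadruples. Combining these via Pl\"unnecke--Ruzsa and Balog--Szemer\'edi--Gowers-type manipulations, I would extract a subset $A'\subseteq A$ of size $\gg n^{1/2}$ for which both $|A'+A'|$ and $|A'\cdot A'|$ are bounded by $|A'|^{1+c\epsilon}$ for an absolute constant $c$ controlled by the losses in the pigeonholing. Applying the Katz--Shen bound $\max(|A'+A'|,|A'\cdot A'|)\gg|A'|^{1+\delta_0}$, valid because $A'$ inherits the antifield property and comfortably exceeds the threshold $\gamma n^{2560/6419}$, produces a contradiction provided $c\epsilon<\delta_0$. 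The specific exponent $1/12838$ and the threshold exponent $2560/6419$ should be the arithmetic output of matching the Katz--Shen exponent $\delta_0$ against the bookkeeping constant $c$.

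The hardest step is the projective one: one must show that for the particular transformations forced by the Helfgott--Rudnev reduction, the strong-antifield property of $P$ passes to an antifield property of the transformed projection $A$. This is nontrivial because projective maps mix the coordinates, so the image of $A(P)$ depends on both coordinates of $P$, and in principle a large intersection with $aG+b$ in the image could arise from many translates of $G$ being represented in the original $A(P)$. Controlling this transfer is precisely what motivates the translate-counting clause $1.(b)$, and I expect the proof of preservation to be the main technical content of the paper, with the rest of the argument following the familiar Helfgott--Rudnev template once the sum-product input is available.
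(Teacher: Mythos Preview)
Your high-level architecture is right and matches the paper: a Bourgain--Katz--Tao/Helfgott--Rudnev projective reduction, a lemma guaranteeing that the strong-antifield property survives this reduction (becoming merely an antifield), and then a Katz--Shen-style argument replacing the prime-field sum-product input. You also correctly identify the projective preservation step as the conceptual core and correctly explain why clause $1.(b)$ is needed. In the paper this is Lemma~\ref{theorem:key}, proved via cross-ratios: any cross-ratio-preserving injection into a $(1,\lambda)$-strong-antifield has image a $(1,\lambda)$-antifield.

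Where your proposal diverges, and where there is a genuine gap, is your third step. You propose to extract a single set $A'\subseteq A$ of size $\gg n^{1/2}$ with both $|A'+A'|$ and $|A'\cdot A'|$ small, and then invoke a sum-product theorem as a black box. The paper does not do this, and it is not clear that one can: the post-reduction configuration $P^*$ lies on a grid of lines through the origin with gradients in a set $A$ and horizontal lines with heights in a set $B$, and collinearity in this configuration gives mixed additive relations involving \emph{two} sets, of the shape $|A_c^1+cA_c^2|$ small for many $c$, not a single set with simultaneously small sumset and productset. The paper instead counts collinear triples $T=I_3(P^*,L(P^*))$, obtains a lower bound $T\gg n^{5/2-15\epsilon}$ from H\"older, and an upper bound (Proposition~\ref{theorem:above}) by a long Balog--Szemer\'edi--Gowers plus Pl\"unnecke--Ruzsa chain producing a set $Z$ to which the Katz--Shen pivoting dichotomy (Lemmas~\ref{theorem:pivot2} and~\ref{theorem:pivot1}) is applied directly; the antifield hypothesis enters only to rule out the case $R(Z)$ a subfield. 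The exponents $1/12838$ and $2560/6419$ come out of comparing these two bounds on $T$, not from matching a clean sum-product exponent $\delta_0$ against a bookkeeping constant. So while your outline would plausibly give \emph{some} quantitative bound if the extraction step could be made to work, it does not match the paper's mechanism, and you would need to either justify that extraction or replace it with the collinear-triple argument actually used.
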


The majority of this paper is concerned with the proof of Theorem \ref{theorem:result}. But since it is not necessarily obvious that many point sets should satisfy the conditions of the theorem, we shall first show that it is easy to construct examples in the important cases $q=p^2$ and $q=p^4$. This is demonstrated by the following two corollaries; the first corollary demonstrates the requirement for limited interaction with subfields, and the second corollary demonstrates how one can ignore `small' subfields.

\begin{corollary}[Construction when $q=p^2$]\label{theorem:p^2} Let $P \subseteq \mathbb{F}_{p^2} \times \mathbb{F}_{p^2}$ with $|P|=n$, and define $A=A(P)=\left\{x:(x,y) \in P\right\}$. Let $t$ be a defining element of $\mathbb{F}_{p^2}$ over $\mathbb{F}_p$, so that $\mathbb{F}_{p^2}=\mathbb{F}_p + t \mathbb{F}_p$. Suppose that $|A|\ll p$ and that $A=\bigcup_{j \in J}A_j$
where $J \subseteq \mathbb{F}_p$ with $|J|\ll \max\left\{p^{\frac{1}{2}},n^{\frac{2560}{6419}}\right\}$, and $A_j \subseteq \mathbb{F}_p+jt$ with $|A_j|\ll \max\left\{p^{\frac{1}{2}},n^{\frac{2560}{6419}}\right\}$ for each $j \in J$. Then we have 
$I(P,L)\ll n^{\frac{3}{2}-\frac{1}{12838}} $ for all sets of lines $L$ in $\mathbb{F}_{p^2}\times \mathbb{F}_{p^2}$ with $|L|=n$.
\end{corollary}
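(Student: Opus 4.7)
The plan is to verify that $P$ satisfies the hypothesis of Theorem \ref{theorem:result}, namely that it is a $\left(2,\gamma n^{\frac{2560}{6419}}\right)$-strong-antifield for the absolute constant $\gamma$ supplied by that theorem, and then to quote the theorem directly. Since $\mathbb{F}_{p^2}$ has only two subfields, the prime subfield $\mathbb{F}_p$ and $\mathbb{F}_{p^2}$ itself, only these two cases need to be checked. I would set $\lambda = \gamma n^{\frac{2560}{6419}}$ and identify $\mathbb{F}_{p^2}$ with $\mathbb{F}_p \times \mathbb{F}_p$ via $u+vt \leftrightarrow (u,v)$; under this identification the hypothesis reads that $A$ lies in the union of $|J|$ horizontal lines $\{v=j\}_{j \in J}$, each of which meets $A$ in at most $\max\{p^{\frac{1}{2}}, n^{\frac{2560}{6419}}\}$ points.

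The case $G = \mathbb{F}_{p^2}$ is immediate: the only translate of $\mathbb{F}_{p^2}$ inside itself is $\mathbb{F}_{p^2}$, so the strong-antifield count is trivially controlled, while the antifield bound $|A \cap (aG+b)| \leq \max\{\lambda, p\}$ reduces to the given $|A| \ll p$. The substantive case is $G = \mathbb{F}_p$. The key observation I would exploit is that an affine copy $a\mathbb{F}_p+b$ corresponds to an affine line in $\mathbb{F}_p \times \mathbb{F}_p$; when $a \in \mathbb{F}_p^*$ this line is horizontal and so meets $A$ inside a single $A_j$, while when $a \notin \mathbb{F}_p$ it is non-horizontal and crosses each horizontal layer $v=j$ in at most one point. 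Either way the intersection with $A$ has cardinality at most $\max\{p^{\frac{1}{2}}, n^{\frac{2560}{6419}}\}$, which gives the $(1,\lambda)$-antifield bound. The strong-antifield count for $G=\mathbb{F}_p$ then follows from the fact that the translates of $\mathbb{F}_p$ meeting $A$ are precisely $\{\mathbb{F}_p + jt : j \in J\}$, of which there are $|J| \ll \max\{p^{\frac{1}{2}}, n^{\frac{2560}{6419}}\}$.

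The only real difficulty is constant bookkeeping: I need the implicit constants in the corollary's hypotheses chosen small enough that the strict inequalities of the strong-antifield definition hold with the specific value of $\gamma$ coming from Theorem \ref{theorem:result}, in particular that $|J| < \max\{\lambda, p^{\frac{1}{2}}\}/2$ whenever $|G|\geq \lambda$. Since this is a purely mechanical check I would not expect any substantive obstacle. Once it is carried out, Theorem \ref{theorem:result} applies to $P$ and yields the bound $I(P,L)\ll n^{\frac{3}{2}-\frac{1}{12838}}$.
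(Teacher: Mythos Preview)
Your proposal is correct and follows essentially the same approach as the paper: verify that $A$ is a $(1,\lambda)$-strong-antifield by checking the two subfields $\mathbb{F}_p$ and $\mathbb{F}_{p^2}$ separately, then invoke Theorem~\ref{theorem:result}. In fact your treatment of the case $G=\mathbb{F}_p$ is slightly more careful than the paper's, since you correctly account for all $p+1$ directions of affine lines $a\mathbb{F}_p+b$ via the horizontal/non-horizontal dichotomy, whereas the paper's proof (somewhat imprecisely) lists only the two coset families $\mathbb{F}_p+jt$ and $t\mathbb{F}_p+k$.
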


\begin{proof}
We need to show that the hypotheses imply that $P$ is a $\left(2,\gamma n^{\frac{2560}{6419}}\right)$-strong-antifield. To do this, we first need to show that $P$ is simply a $\left(2,\gamma n^{\frac{2560}{6419}}\right)$-antifield. Note that the only sets of the form $a\mathbb{F}_p+b$ with $a, b \in \mathbb{F}_{p^2}$ are given by $\mathbb{F}_p +jt$ and $t\mathbb{F}_p +k$, where $j,k$ range over $\mathbb{F}_p$. Note further that $\left(\mathbb{F}_p +jt \right)\cap\left( t\mathbb{F}_p+k\right)=\left\{jt+k\right\}$. We know by assumption that $$|A \cap \left(\mathbb{F}_p+jt\right)|\ll \max\left\{p^{\frac{1}{2}},n^{\frac{2560}{6419}}\right\}$$ for each $j \in \mathbb{F}_p$. Observe that 
\begin{align*}
|A \cap \left(t\mathbb{F}_p+k\right)|= \sum_{j \in \mathbb{F}_p}\left|A \cap \left(t \mathbb{F}_p +k \right)\cap\left(\mathbb{F}_p+jt\right) \right|= \# \left\{j \in \mathbb{F}_p:\left|A \cap \left(\mathbb{F}_p+jt\right)\right|\right\}\leq |J| \ll \max\left\{p^{\frac{1}{2}},n^{\frac{2560}{6419}}\right\}.
\end{align*}
So we conclude that $P$ is a $\left(2,\gamma n^{\frac{2560}{6419}}\right)$-antifield. Since $|J| \ll \max\left\{p^{\frac{1}{2}},n^{\frac{2560}{6419}}\right\}$ it is also a $\left(2,\gamma n^{\frac{2560}{6419}}\right)$-strong-antifield, as required.
\end{proof}

\begin{corollary}[Construction when $q=p^4$]\label{theorem:p^4} Let $P \subseteq \mathbb{F}_{p^4} \times \mathbb{F}_{p^4}$ with $|P|=n\gg p^{\frac{6419}{2560}}$, and define $A=A(P)=\left\{x:(x,y) \in P\right\}$. Let $t$ be a defining element of $\mathbb{F}_{p^4}$ over $\mathbb{F}_{p^2}$, so that $\mathbb{F}_{p^4}=\mathbb{F}_{p^2} + t \mathbb{F}_{p^2}$. Suppose that $|A|\ll p^2$ and that $A=\bigcup_{j \in J}A_j$
where $J \subseteq \mathbb{F}_{p^2}$ with $|J|\ll \max\left\{p,n^{\frac{2560}{6419}}\right\}$, and $A_j \subseteq \mathbb{F}_p+jt$ with $|A_j|\ll \max\left\{p,n^{\frac{2560}{6419}}\right\}$ for each $j \in J$. Then we have 
$I(P,L)\ll n^{\frac{3}{2}-\frac{1}{12838}} $ for all sets of lines $L$ in $\mathbb{F}_{p^4}\times \mathbb{F}_{p^4}$ with $|L|=n$.
\end{corollary}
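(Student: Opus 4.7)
The plan is to show that the stated hypotheses force $P$ to be a $(2,\gamma n^{2560/6419})$-strong-antifield, whence Theorem \ref{theorem:result} supplies the desired incidence bound. The subfields of $\mathbb{F}_{p^4}$ are $\mathbb{F}_p$, $\mathbb{F}_{p^2}$, and $\mathbb{F}_{p^4}$ itself, so I would verify the antifield and strong-antifield inequalities subfield by subfield. Throughout, write $\lambda = \gamma n^{2560/6419}$ and note that the cardinality hypothesis $n \gg p^{6419/2560}$ is chosen precisely to ensure $\lambda \gg p$.

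Two of the three subfields are handled almost immediately. For $G = \mathbb{F}_{p^4}$, the hypothesis $|A| \ll p^2 = |G|^{1/2}$ gives the antifield bound, and $G$ has only a single translate so the strong-antifield condition is vacuous. For $G = \mathbb{F}_p$, the trivial bound $|A \cap (a\mathbb{F}_p + b)| \leq p \leq \lambda$ supplies the antifield condition, and the strong-antifield condition is imposed only for subfields with $|G| \geq \lambda$, which $\mathbb{F}_p$ fails since $p < \lambda$.

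The substantive case is $G = \mathbb{F}_{p^2}$, and it is the direct analogue of the corresponding step in Corollary \ref{theorem:p^2}. I would view $\mathbb{F}_{p^4}$ as a two-dimensional $\mathbb{F}_{p^2}$-vector space with basis $\{1,t\}$, so that the affine copies $a\mathbb{F}_{p^2} + b$ are exactly the one-dimensional affine $\mathbb{F}_{p^2}$-subspaces, and the decomposition $A = \bigcup_{j \in J} A_j$ realises $A$ as a union of subsets of the $|J|$ horizontal lines $\mathbb{F}_{p^2} + jt$. Given an arbitrary affine line $L = a\mathbb{F}_{p^2} + b$, I would split into two cases: if $L$ is horizontal (i.e.\ $a \in \mathbb{F}_{p^2}$) then it coincides with some $\mathbb{F}_{p^2} + jt$ and $|A \cap L| \leq |A_j| \ll \max\{p, n^{2560/6419}\}$; otherwise $L$ meets each horizontal line transversally in a single point, so $|A \cap L| \leq |J| \ll \max\{p, n^{2560/6419}\}$. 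The same bookkeeping handles the strong-antifield condition for $\mathbb{F}_{p^2}$: the translates $\mathbb{F}_{p^2} + b$ that meet $A$ are indexed by $J$, and $|J| \ll \max\{p, n^{2560/6419}\}$ falls strictly below $\max\{\lambda,p\}/2 = \lambda/2$ once $\gamma$ is chosen larger than twice the implicit constant in the hypothesis on $|J|$.

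No single step is hard; the main difficulty is administrative, namely keeping straight the three thresholds $p^{1/2}$, $p$, and $\lambda$ arising from the three subfields, together with the two separate instances of $\max\{\lambda,|G|^{1/2}\}$ appearing in the antifield and strong-antifield definitions. The central observation is that $n \gg p^{6419/2560}$ makes $\lambda$ dominate $p$, which is exactly what trivialises the $\mathbb{F}_p$-antifield condition and aligns the $\mathbb{F}_{p^2}$ bounds with the hypotheses on $|J|$ and on the $|A_j|$.
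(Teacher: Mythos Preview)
Your approach is correct and essentially the same as the paper's: verify the strong-antifield conditions subfield by subfield, using $n \gg p^{6419/2560}$ to make $\lambda$ dominate $p$ and thereby trivialise the $\mathbb{F}_p$ case, then handle $\mathbb{F}_{p^2}$ by the argument of Corollary~\ref{theorem:p^2} (and $\mathbb{F}_{p^4}$ is immediate from $|A|\ll p^2$). The paper compresses all this into a two-line sketch referring back to Corollary~\ref{theorem:p^2}; your more explicit treatment is exactly what that sketch unpacks to---just note that $\gamma$ is fixed by Theorem~\ref{theorem:result}, so it is the implicit constants in the hypotheses on $|J|$, $|A_j|$, and $n$ that must be tuned relative to $\gamma$, not the other way round.
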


\begin{proof}
We need to show that the hypotheses imply that $P$ is a $\left(2,\gamma n^{\frac{2560}{6419}}\right)$-strong-antifield. Note that since $n\gg p^{\frac{6419}{2560}}$, we can ignore the subfield $\mathbb{F}_p$ and need check this only with respect to the subfields $\mathbb{F}_{p^2}$ and  $\mathbb{F}_{p^4}$. This checking follows Corollary \ref{theorem:p^2}.
\end{proof}

\section{Structure for proving Theorem \ref{theorem:result}}
The rest of the paper is concered with proving Theorem \ref{theorem:result}. This section outlines the structure of the proof. It states results, which will be proved later, and shows how they fit together to give the overall proof. There are two components to this. The first component is a key lemma that relates the algebraic and geometric structure of antifields. The second component uses this key lemma, and a method of Katz and Shen \cite{KS}, as part of an otherwise technical generalisation of the Helfgott-Rudnev proof.  

\subsection{The first component: Relating the algebraic and geometric stucture of antifields}
Recall that we defined both \textbf{antifields} and \textbf{strong-antifields}, that both are defined algebraically, and that Theorem \ref{theorem:result} is a statement about strong-antifields. The first component of the proof of Theorem \ref{theorem:result} is to relate the algebraic and geometric structure of these objects by showing that under certain projective transformations the image of a strong-antifield is an antifield.

The formal statement is expressed in terms of \textbf{cross ratios}. These are projective invariants, which means that they are preserved by projective transformations of a line and so are important in projective geometry.

\begin{definition}
Let $F$ be a field and let $a,b,c,d \in F$ with $a \neq d$ and $b \neq c$. Then define the \textbf{cross ratio} $X(a,b,c,d)$ by
$$X(a,b,c,d)=\frac{(a-b)(c-d)}{(a-d)(c-b)}$$
\end{definition}

We can now state the key lemma:

\begin{lemma}\label{theorem:key}
Let $A \subseteq F$ be a $(1,\lambda)$-strong-antifield and let $B \subseteq F$. Suppose there is a cross-ratio-preserving injection $\tau:B \to A$ (i.e. an injection $\tau$ for which $X(\tau(b_1),\tau(b_2),\tau(b_3),\tau(b_4))=X(b_1,b_2,b_3,b_4)$ whenever $b_1,b_2,b_3,b_4 \in B$). Then $B$ is a $(1,\lambda)$-antifield. 
\end{lemma}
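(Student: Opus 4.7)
The plan is to reduce the problem to classical facts about projective geometry over the subfield. Assuming $|B|\geq 3$ (smaller $B$ being trivial for any reasonable $\lambda$), the first step is to extend $\tau$ to a M\"obius transformation $\phi\in PGL_2(F)$: pick three distinct $b_1,b_2,b_3\in B$ and let $\phi$ be the unique element of $PGL_2(F)$ with $\phi(b_i)=\tau(b_i)$, which exists by the $3$-transitivity of $PGL_2(F)$ on $\mathbb{P}^1(F)$. Because three entries of a cross-ratio determine the fourth, the hypothesis on $\tau$ then forces $\phi(b)=\tau(b)$ for every $b\in B$; in particular $\phi(B)\subseteq A$ and no point of $B$ is mapped to $\infty$.

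Next, I would suppose for contradiction that $B$ fails to be a $(1,\lambda)$-antifield, so that there exist a subfield $G$ of $F$ and $a,b\in F$ with $T:=B\cap(aG+b)$ satisfying $|T|>\max\{\lambda,|G|^{1/2}\}$. From $T\subseteq aG+b$ one gets $|G|\geq|T|>\lambda$, so the translate-counting clause of the strong-antifield hypothesis on $A$ does apply to this $G$. Applying $\phi$ yields $\phi(T)\subseteq A\cap\phi(aG+b)$. The set $\{\phi(\infty)\}\cup\phi(aG+b)$ is the image of $\mathbb{P}^1(G)\subset\mathbb{P}^1(F)$ under the M\"obius transformation $z\mapsto\phi(az+b)$; I will call any such image a \emph{$G$-subvariety} of $\mathbb{P}^1(F)$. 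The affine copies $\{\infty\}\cup(a'G+b')$ are exactly the $G$-subvarieties containing $\infty$.

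I would then split into two cases. If $\phi$ sends the point at infinity of $aG+b$ to $\infty$, then $\phi(aG+b)=a'G+b'$ for some $a',b'\in F$, and $\phi(T)\subseteq A\cap(a'G+b')$ already has more than $\max\{\lambda,|G|^{1/2}\}$ elements, contradicting the $(1,\lambda)$-antifield clause of the hypothesis on $A$. Otherwise $\{\phi(\infty)\}\cup\phi(aG+b)$ is a $G$-subvariety not containing $\infty$, hence distinct from every affine $G$-subvariety $\{\infty\}\cup(G+b'')$. The crucial observation is that two distinct $G$-subvarieties of $\mathbb{P}^1(F)$ share at most two points: three common points would, via $3$-transitivity of $PGL_2(G)$ on $\mathbb{P}^1(G)$, allow us to align their preimages inside $\mathbb{P}^1(G)$ and then use $3$-transitivity of $PGL_2(F)$ on $\mathbb{P}^1(F)$ to force the two subvarieties to coincide. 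Consequently each translate $G+x$ meets $\phi(T)$ in at most $2$ points, so $\phi(T)\subseteq A$ meets at least $|T|/2>\max\{\lambda,|G|^{1/2}\}/2$ distinct translates of $G$, contradicting the translate-counting clause.

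The main obstacle is the $G$-subvariety intersection bound; everything else is algebraic bookkeeping. This bound is what converts the loss of affine structure under a general M\"obius transformation into a counting statement about translates of $G$, which is precisely what the `strong' part of the definition of strong-antifield is engineered to handle.
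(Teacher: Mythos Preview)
Your argument is correct and follows the same overall strategy as the paper: show that the image of $T=B\cap(aG+b)$ inside $A$ either sits in a single affine copy $a'G+b'$ (violating the antifield clause) or meets every translate $G+b''$ in at most two points (violating the translate-count clause). The paper packages the dichotomy differently: rather than extending $\tau$ to a M\"obius map and invoking the ``two $G$-sublines meet in at most two points'' principle, it simply observes that $X(\tau(T))=X(T)\subseteq G$ and proves by an explicit cross-ratio computation (Lemma~\ref{theorem:cr}) that any set whose cross ratios all lie in $G$ is either contained in some $xG+y$ or meets every $xG+y$ in at most two points. Your $3$-transitivity argument and the paper's computation establish the same fact; yours is more conceptual, the paper's more elementary, and neither needs the extension of $\tau$ to all of $\mathbb{P}^1(F)$ that you perform, since the cross-ratio hypothesis already transports $X(T)\subseteq G$ to $X(\tau(T))\subseteq G$ directly.

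One small slip in your case split: when $\phi(\infty)\neq\infty$ it does \emph{not} follow that $\{\phi(\infty)\}\cup\phi(aG+b)$ avoids $\infty$, since the pole $\phi^{-1}(\infty)$ could lie in $aG+b$. If it does, the $G$-subvariety in question still contains $\infty$ and hence equals $\{\infty\}\cup(a'G+b')$; since $\phi(T)\subseteq A\subseteq F$ you still get $\phi(T)\subseteq a'G+b'$ and finish as in your first case. So the correct dichotomy is whether $\infty\in\phi\bigl(\{\infty\}\cup(aG+b)\bigr)$, not whether $\phi(\infty)=\infty$.
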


\subsection{The second component: Applying the first component in a technical modification of the Helfgott-Rudnev proof}

The structure of the second component broadly follows \cite{HR}. It begins by applying Lemma \ref{theorem:key} in an adaptation of an argument of Bourgain, Katz and Tao \cite{BKT} to replace $L$ and $P$ with a construction of lines and points of a certain form, at the expense of some incidences and of passing from a strong-antifield to an antifield.

\begin{proposition}\label{theorem:propbelow}
Let $F$ be a field, and let $P$ and $L$ be a set of lines and points respectively in $F \times F$ with $|P|=|L|=n$ such that $I(P,L)=n^{\frac{3}{2}-\epsilon}$ for some $\epsilon>0$. Let $\lambda \geq 0$. Then, if $P$ is a $(2,\lambda)$-strong-antifield there exist:

\begin{enumerate}
\item Sets $A,B \subseteq F$ with $|A|,|B| \ll n^{\frac{1}{2}+\epsilon}$ and $0 \notin B$
\item A set $L_A$ of lines through the origin with gradients in $A$.
\item A set $L_B$ of horizontal (i.e. gradient $0$) lines with $y$-intercepts in $B$
\item A $(2,\lambda)$-antifield $P^*$ with $|P^*|\leq n$, the points of which each lie on the intersection of a line in $L_A$ with a line in $L_B$.
\end{enumerate}
such that $I\left(P^*,L(P^*)\right)\gg n^{\frac{3}{2}-5\epsilon}$
where $L(P^*)$ is the set of lines determined by pairs of points in $P^*$.
\end{proposition}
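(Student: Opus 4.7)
The plan is a Bourgain--Katz--Tao--style reduction: after a dyadic refinement of the incidence structure, pick two popular points $p_0\neq p_1$ by averaging, then apply a projective transformation $\phi$ sending $p_0$ to $(0,0)$ and $p_1$ to the horizontal point at infinity, chosen so that its action on the first coordinate is a single M\"obius map. The set $P^*$ will be the $\phi$-image of those points of $P$ that simultaneously lie on a line of $L$ through $p_0$ and a line of $L$ through $p_1$.

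First I would dyadically pigeonhole $I(P,L)=n^{3/2-\epsilon}$ into a refined $P,L$ in which every surviving point lies on $\approx k:=n^{1/2-\epsilon}$ lines and every surviving line contains $\approx k$ points, at a cost of $n^{O(\epsilon)}$ factors. Writing $E(p,q)=1$ when $pq\in L$, one has $\sum_q E(p,q)\approx k^2$ for each such $p$, so
$$\sum_{p_0\neq p_1}\bigl|\{q\in P:E(p_0,q)=E(p_1,q)=1\}\bigr|=\sum_q\Bigl(\sum_p E(p,q)\Bigr)^2-O(nk^2)\gtrsim nk^4.$$
Averaging over the $\approx n^2$ pairs, and discarding the few non-generic choices (pairs sharing an $x$-coordinate, or with the vertical through $p_1$ dense in $P$; both ruled out by the lower bound $|x(P)|\gtrsim n^{1/2-\epsilon}$ which $I(P,L)\gg n^{3/2-\epsilon}$ itself forces) yields $p_0\neq p_1$ for which
$$Q:=\{q\in P:p_0q,\,p_1q\in L,\,q\notin\overline{p_0p_1},\,x(q)\neq x(p_1)\}$$
satisfies $|Q|\gtrsim k^4/n=n^{1-4\epsilon}$. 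Set $L_0=\{l\in L:p_0\in l\}$ and $L_1=\{l\in L:p_1\in l,\,l\neq\overline{p_0p_1}\}$; both have size $\approx k$.

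After translating so that $p_0=(0,0)$ and writing $p_1=(u,v)$ with $u\neq 0$, define
$$\phi(x,y)=\Bigl(\tfrac{x}{u-x},\;\tfrac{uy-vx}{u-x}\Bigr).$$
A direct calculation shows $\phi(p_0)=(0,0)$, that $\phi(p_1)$ is the horizontal point at infinity, that lines through $p_0$ map to lines through the origin (with gradient map $a\mapsto ua-v$), that lines through $p_1$ map to horizontal lines (with $y$-intercept map $a\mapsto v-ua$), and that $\overline{p_0p_1}$ maps to the $x$-axis. Let $A$ be the gradients of $\phi(L_0)$, $B$ the $y$-intercepts of $\phi(L_1)$, and $P^*=\phi(Q)$. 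Then $|A|,|B|\approx k\ll n^{1/2+\epsilon}$, $0\notin B$ since $\overline{p_0p_1}$ was excluded from $L_1$, and by construction every point of $P^*$ lies on the intersection of a line in $L_A$ with a line in $L_B$. The crucial feature is that the first coordinate of $\phi$, namely $\mu(x):=x/(u-x)$, depends only on $x$ and is a M\"obius map; hence the $x$-projection of $P^*$ is the image under $\mu$ of a subset of the $x$-projection of $P$. Since $\mu^{-1}$ preserves cross ratios and the $x$-projection of $P$ is a $(1,\lambda)$-strong-antifield, Lemma~\ref{theorem:key} (taking $\tau=\mu^{-1}$) implies the $x$-projection of $P^*$ is a $(1,\lambda)$-antifield, so $P^*$ is a $(2,\lambda)$-antifield.

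Each $q\in Q$ sits on $\approx k$ lines of $L$ whose $\phi$-images pass through $\phi(q)\in P^*$, giving $I(P^*,\phi(L))\gtrsim k|Q|\gtrsim k^5/n=n^{3/2-5\epsilon}$. Lines of $\phi(L)$ meeting $P^*$ in at most one point contribute at most $|L|\leq n\ll n^{3/2-5\epsilon}$ incidences and can be discarded, so the rest lie in $L(P^*)$ and $I(P^*,L(P^*))\gg n^{3/2-5\epsilon}$. The main obstacle is the construction of $\phi$: it must both send $p_0,p_1$ to the prescribed targets and act on the first coordinate by a single M\"obius map, so that Lemma~\ref{theorem:key} applies; the explicit $\phi$ above realises this balance, and the bookkeeping of the pigeonhole, averaging, and discard steps must be arranged to compound to the stated $5\epsilon$ loss rather than anything worse.
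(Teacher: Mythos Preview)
Your proposal is correct and follows essentially the same Bourgain--Katz--Tao strategy as the paper: regularise the incidence configuration, use a second-moment/Cauchy--Schwarz argument to locate two points $p_0,p_1$ with a large common neighbourhood $Q$ of size $\gg n^{1-4\epsilon}$, and then apply a projective transformation whose action on the first coordinate is a single M\"obius map so that Lemma~\ref{theorem:key} transfers the strong-antifield property of $P$ to the antifield property of $P^*$.

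The only substantive organisational difference is in the projective step. The paper translates $p$ to the origin, applies $\tau(x,y)=(1/x,y/x)$ sending the origin to the horizontal point at infinity, and then performs a second translation to move $\tau(q)$ to the origin; the first-coordinate map is $x\mapsto 1/x$ and the antifield check is done before the final translation. You instead build a single explicit map $\phi(x,y)=\bigl(x/(u-x),\,(uy-vx)/(u-x)\bigr)$ that sends $p_0$ to the origin and $p_1$ to horizontal infinity in one stroke, with first-coordinate map $\mu(x)=x/(u-x)$; this is equally valid and arguably cleaner, since it makes the M\"obius action on the $x$-axis manifest and avoids the second translation. The roles of which point goes to the origin and which to infinity are swapped relative to the paper, but this is immaterial by symmetry. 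One small caution: your opening ``dyadic pigeonhole'' is phrased loosely; to land exactly on the $5\epsilon$ loss without logarithmic debris you should implement it via the paper's threshold pruning (discard points of too-high or too-low degree, then pass to rich lines and bushy points) rather than a genuine dyadic decomposition.
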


Following \cite{HR} we then generalise the definition of incidences to colinear $k$-tuples for any integer $k$:

\begin{definition}[Colinear $k$-tuples]\label{theorem:colinear}
Let $F$ be a field. Let $P$ be a finite set of points in $F \times F$ and let $L$ be a finite set of lines in $F \times F$. We define the number of \textbf{colinear $k$-tuples} between $P$ and $L$, denoted $I_k(P,L)$ by
$$I_k(P,L)=\left|\left\{\left(p_1,\ldots,p_k,l\right)\in P^k \times L:p_1,\ldots,p_k \in l\right\}\right|$$
\end{definition}

This generalises the definition of incidences because $I(P,L)=I_1(P,L)$. Moreover, the following lemma shows that H\"older's inequality relates incidences to colinear $k$-tuples:

\begin{lemma}\label{theorem:relation}
Let $F$ be a field and $k \in \mathbb{N}$. Let $P,L$ be sets of points and lines in $F \times F$. Then we have $I_k\left(P,L\right) \geq \frac{I\left(P,L\right)^k}{|L|^{k-1}}$.
\end{lemma}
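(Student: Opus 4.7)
The plan is to express both $I(P,L)$ and $I_k(P,L)$ as sums over lines of the line-multiplicity function, and then apply Hölder's inequality exactly as the statement of the lemma suggests.

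First I would define, for each line $l \in L$, the quantity $i(l) = |\{p \in P : p \in l\}|$, so that summing over $l \in L$ gives
\begin{equation*}
I(P,L) = \sum_{l \in L} i(l).
\end{equation*}
The key observation is that the number of colinear $k$-tuples supported on a single line $l$ is exactly $i(l)^k$, since each of the $k$ point-slots can be filled independently by any of the $i(l)$ points of $P$ lying on $l$. Hence
\begin{equation*}
I_k(P,L) = \sum_{l \in L} i(l)^k.
\end{equation*}

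Next I would apply Hölder's inequality to the sum $\sum_{l \in L} i(l) = \sum_{l \in L} i(l) \cdot 1$, with exponents $k$ and $k/(k-1)$:
\begin{equation*}
\sum_{l \in L} i(l) \leq \left(\sum_{l \in L} i(l)^k\right)^{1/k} \left(\sum_{l \in L} 1\right)^{(k-1)/k} = I_k(P,L)^{1/k} \, |L|^{(k-1)/k}.
\end{equation*}
Raising both sides to the $k$-th power and rearranging gives $I_k(P,L) \geq I(P,L)^k / |L|^{k-1}$, as claimed.

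There is no real obstacle here: the only thing to be careful about is the $k=1$ case, where the bound reduces to an equality $I_1(P,L) = I(P,L)$ which is consistent with the convention $|L|^0 = 1$, and the trivial case where $I(P,L) = 0$, in which the inequality holds vacuously.
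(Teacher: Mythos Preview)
Your proof is correct and follows essentially the same approach as the paper: define the line-multiplicity function $f(l)=\sum_{p\in P}\delta_{pl}$, identify $I(P,L)=\|f\|_1$ and $I_k(P,L)=\|f\|_k^k$, and apply H\"older's inequality $\|f\|_1 \le \|f\|_k\,\|1\|_{k/(k-1)}$. The paper's argument is exactly this, just phrased in norm notation rather than with the sums written out.
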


\begin{proof}
Define $f:L \to \mathbb{N}$ by $f(l)=\sum_{p \in P}\delta_{lp}$ where $\delta_{lp}=1$ if $p \in L$ and $0$ otherwise, i.e. $f(l)$ is the number of points in $P$ that are incident to $l$. Note that $\left\|f \right\|_k=I_k(P,L)^{\frac{1}{k}}$. H\"older's inequality implies that $\left\|f \right\|_1 \leq \left\|f \right\|_k \left\|1 \right\|_{\frac{k}{k-1}}$, which is the same as $I(P,L) \leq I_k(P,L)^{\frac{1}{k}} |L|^{\frac{k-1}{k}} $.
\end{proof}

Applying Lemma \ref{theorem:relation} with $k=3$ reinterprets Proposition \ref{theorem:propbelow} as a lower bound on colinear triples:

\begin{corollary}\label{theorem:below}
With the notation in Proposition \ref{theorem:propbelow} and Definition \ref{theorem:colinear}, we also have 
$I_3\left(P^*,L(P^*)\right) \gg n^{\frac{5}{2}-15\epsilon}$
\end{corollary}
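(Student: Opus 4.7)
The plan is to apply Lemma \ref{theorem:relation} with $k=3$ directly to the configuration $(P^*, L(P^*))$ supplied by Proposition \ref{theorem:propbelow}. The lemma gives
\[
I_3\bigl(P^*, L(P^*)\bigr) \;\geq\; \frac{I(P^*, L(P^*))^3}{|L(P^*)|^2}.
\]
Cubing the bound $I(P^*, L(P^*)) \gg n^{3/2 - 5\epsilon}$ furnished by Proposition \ref{theorem:propbelow} makes the numerator $\gg n^{9/2 - 15\epsilon}$, so the claimed $I_3 \gg n^{5/2 - 15\epsilon}$ follows once we know $|L(P^*)| \ll n$.

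The content of the proof is therefore justifying this bound on $|L(P^*)|$. The naive estimate $|L(P^*)| \leq \binom{|P^*|}{2}$ can be as large as $n^2$ and is too weak. The standard Helfgott--Rudnev-type fix is a dyadic pigeonhole on the line multiplicities $I_l$: bin the lines of $L(P^*)$ according to the dyadic range of $I_l$, select the bin $L'$ carrying $\gg (\log n)^{-1}$ of the incidences, and, if necessary, trim it to at most $n$ lines while retaining a comparable fraction of incidences (possible because within a single dyadic bin all $I_l$ are comparable). Applying Lemma \ref{theorem:relation} to $(P^*, L')$ rather than to $(P^*, L(P^*))$ then gives
\[
I_3(P^*, L') \;\gg\; \frac{n^{9/2 - 15\epsilon}}{n^2} \;=\; n^{5/2 - 15\epsilon},
\]
and since $L' \subseteq L(P^*)$ the inequality $I_3(P^*, L(P^*)) \geq I_3(P^*, L')$ transfers the bound to the full $L(P^*)$.

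The only genuine obstacle is executing the refinement step without spending more of the $\epsilon$-budget than the statement allows (in particular, ensuring that the unavoidable logarithmic factor from dyadic pigeonholing can be absorbed into the implicit $\ll$-constant rather than into the exponent). Everything else is routine substitution into Lemma \ref{theorem:relation}.
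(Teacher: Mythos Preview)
Your diagnosis is correct: applying Lemma~\ref{theorem:relation} directly to $(P^*,L(P^*))$ is insufficient because $|L(P^*)|$ can be as large as $\binom{|P^*|}{2}$. But the dyadic-pigeonhole-and-trim repair has a real gap, not merely a logarithmic one. The assertion that trimming the popular bin $L'$ to $n$ lines ``retains a comparable fraction of incidences'' is false. If $L'$ consists of $m$ lines each meeting $P^*$ in about $2^j$ points, then $I(P^*,L')\approx m\,2^j$; cutting to $n$ lines drops this to about $n\,2^j$, a loss by the factor $m/n$, and the comparability of the $I_l$ within the bin does nothing to prevent it. In the extreme case where the dominant bin is $\{l:I_l=2\}$ one may have $m$ of order $|P^*|^2$ and $2^j=2$, so after trimming only about $2n$ incidences survive and Lemma~\ref{theorem:relation} gives merely $I_3\ll n$. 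Nothing guarantees the existence of a bin with both $m\le n$ and $m\,2^j\gg n^{3/2-5\epsilon}$, so the argument cannot be closed from the bare conclusion $I(P^*,L(P^*))\gg n^{3/2-5\epsilon}$ of Proposition~\ref{theorem:propbelow}.

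The paper applies Lemma~\ref{theorem:relation} not to $L(P^*)$ but, in effect, to the image $\tau(L)$ of the original line set under the projective transformation constructed inside the proof of Proposition~\ref{theorem:propbelow}. There one has $I(P^*,\tau(L))\gg n^{3/2-5\epsilon}$ together with $|\tau(L)|\le n$, so Lemma~\ref{theorem:relation} with $k=3$ yields $I_3(P^*,\tau(L))\gg n^{5/2-15\epsilon}$ at once. Since any line of $\tau(L)$ meeting $P^*$ in at least two points already belongs to $L(P^*)$, while the remaining lines contribute at most $1$ each to $I_3$, one gets $I_3(P^*,L(P^*))\ge I_3(P^*,\tau(L))-n\gg n^{5/2-15\epsilon}$. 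The needed line set of size at most $n$ is thus supplied by the construction in the proof of Proposition~\ref{theorem:propbelow}; it cannot be manufactured after the fact from $L(P^*)$ alone.
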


So we have a lower bound on colinear triples in $P^*$. Separately, the next proposition gives an upper bound on this quantity, which is obtained by combinatorial methods. Its proof uses the method in \cite{KS} to adapt the approach in \cite{HR}.

\begin{proposition}\label{theorem:above}
There is an absolute constant $\gamma_1$ such that if: 
\begin{itemize}
\item $F$ is a field and $A$, $B$ are finite subsets of $F$ with $0 \notin B$.
\item $L_A$ is the set of lines through the origin with gradients lying in $A$.
\item $L_B$ is the set of horizontal lines crossing the $y$-axis at some $b \in B$.
\item $P$ is a set of points, each lying on the intersection of some line in $L_A$ with some line in $L_B$.
\item $T:=I_3\left(P,L(P)\right)$.
\item $P$ is, additionally, a $\left(2,\frac{\gamma_1 T^{65}}{|A|^{130}|B|^{194}}\right)$-antifield.
\end{itemize}

Then:

\begin{equation*}
T\ll \max{\left\{\left|A\right|^{\frac{643}{321}}\left|B\right|^{\frac{961}{321}},\left|A\right|^{\frac{535}{267}}\left|B\right|^{\frac{799}{267}},\left|A\right|^{\frac{499}{249}}\left|B\right|^{\frac{743}{249}} \right\}} 
\end{equation*}
\end{proposition}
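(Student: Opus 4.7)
The plan is to adapt the proof of Helfgott-Rudnev \cite{HR}, replacing the Bourgain-Katz-Tao sum-product estimate with the Katz-Shen antifield version from \cite{KS} at the crucial step. The strategy: if $T$ exceeds the claimed bound, then a large sub-configuration of $P$ produces a set whose iterated sum-set and product-set are abnormally small, contradicting the sum-product lower bound that the antifield hypothesis guarantees via Katz-Shen.

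First I would parametrise the points of $P$ as $(b/a,b)$ for $(a,b)$ in some subset $S\subseteq A\times B$, since every point of $P$ is the unique intersection of the gradient-$a$ line through the origin with the horizontal $y=b$. A colinear triple of points in $P$ then corresponds to a $6$-tuple in $S^3\subseteq (A\times B)^3$ satisfying the determinant collinearity condition; clearing denominators turns this into a polynomial identity in the $a_i$ and $b_i$. Dyadic pigeonholing over popular rows and columns of $S$ extracts a dense bipartite sub-configuration, and a Balog-Szemer\'edi-Gowers step produces a large subset $A'\subseteq A$ whose additive and multiplicative energies are simultaneously anomalously large when $T$ is large. The Pl\"unnecke-Ruzsa inequality then converts these energy bounds into upper bounds on iterated sum-sets and product-sets of $A'$.

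At this point I would invoke Katz-Shen \cite{KS}: because the $x$-projection of $P$ is a $(1,\lambda)$-antifield with $\lambda=\gamma_1 T^{65}/(|A|^{130}|B|^{194})$, and the refinement $A'$ inherits antifield status with a tractable parameter, the Katz-Shen sum-product estimate forces $\max\{|A'+A'|,|A'\cdot A'|\}\gg |A'|^{1+c}$ for an explicit constant $c>0$. Combining this lower bound with the Pl\"unnecke-Ruzsa upper bound above is incompatible with $T$ exceeding the quantity in the $\max$. The specific exponents $65$, $130$ and $194$ appearing in the antifield parameter are chosen precisely so that the Katz-Shen threshold aligns with the sum-product upper bound produced by the preceding chain of estimates.

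The main obstacle will be bookkeeping. Each dyadic refinement or energy-to-structure step replaces $A$ by a subset $A'$ whose antifield parameter is a polynomial function of the refinement losses; this parameter must be tracked meticulously through every step to confirm that Katz-Shen may legitimately be applied at the end. The three alternatives inside the $\max$ arise from three distinct balancings within the Pl\"unnecke-Ruzsa / sum-product calculus: depending on whether additive or multiplicative growth dominates, and on how the fibres of the projections $P\to A$ and $P\to B$ distribute the triple count, one obtains three candidate inequalities with different powers of $|A|$ and $|B|$, and the statement records the weakest (i.e.\ largest) of them.
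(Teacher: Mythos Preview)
Your outline has the right ingredients at a high level---Balog--Szemer\'edi--Gowers, Pl\"unnecke--Ruzsa, and the Katz--Shen machinery---but it misidentifies the central construction and the origin of the three bounds, and treats the Katz--Shen input as a black box in a way that will not close.

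First, the set on which the sum--product argument runs is not a subset of $A$. The $x$-projection of $P$ lives in $B/A$, and the antifield hypothesis is on that projection. In the paper's argument, one pigeonholes two horizontal lines $h_{b_1},h_{b_2}$; for each third line $h_b$ the collinearity relation becomes $x_1(1-\mu)+x_2\mu\in X_b$ with $\mu=\frac{b-b_1}{b_2-b_1}$, and BSG is applied to the \emph{pair} $X_{b_1},X_{b_2}$ to produce subsets $A_c^1\subseteq X_{b_1}$, $A_c^2\subseteq X_{b_2}$ with small $|A_c^1+cA_c^2|$ for a large set $C$ of parameters $c$. There is no single $A'\subseteq A$ with simultaneously large additive and multiplicative energy; rather one has a family of small \emph{mixed} sumsets indexed by $C$. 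The next step, which your plan omits entirely, is the key one: a Cauchy--Schwarz/pigeonhole finds a fixed $c_*\in C$ so that $A_c^i\cap A_{c_*}^i$ is large for all $c\in C$, and then Bourgain's intersection lemma (the paper's Lemma~\ref{theorem:intersection}) manufactures a single set $Z\subseteq A_*^2-a_1$ intersected with a dilate of $C$, with $|Z|\gg T^{65}/(|A|^{130}|B|^{194})$. This is exactly where the exponents $65,130,194$ in the antifield parameter arise, and why that threshold matters.

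Second, there is no black-box ``Katz--Shen sum--product theorem'' that one can invoke on $Z$. What the paper actually uses is the Katz--Shen \emph{pivoting dichotomy} (the paper's Lemma~\ref{theorem:pivot1}): either $R(Z)$ is not closed under multiplication, or it is closed under multiplication but not addition, or $R(Z)$ is a subfield $G$. The three terms inside the $\max$ are precisely these three cases, not three ``balancings'' of additive versus multiplicative growth. In the first two cases the pivoting lemma gives an explicit three-term sumset that is $\gg |Z|^2$, which one then bounds above by covering arguments and Pl\"unnecke--Ruzsa using the small sumsets from the BSG step. In the third case the antifield hypothesis on $A_*^2$ forces $|Z|\le |G|^{1/2}$, so $|R(Z)|\ge |Z|^2$, and then the older pivoting lemma (Lemma~\ref{theorem:pivot2}) gives a two-term sumset bound. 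Your plan does not articulate this trichotomy, and without it the antifield hypothesis has no point of entry: it is used only in the third case, to rule out $Z$ falling into a coset of a subfield.
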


The results collected above then allow us to prove Theorem \ref{theorem:result}:

\paragraph{Proving Theorem \ref{theorem:result} from the propositions}
Let $|P|=|L|=n$ with $I(P,L)=n^{\frac{3}{2}-\epsilon}$. If $\epsilon > 1/12838$ then we are already done, so assume that $\epsilon \leq 1/12838$. We shall find a constant $\gamma$ such that $\epsilon \geq 1/12838$ so long as $P$ is a $\left(2,\gamma n^{\frac{1}{2}-\frac{1299}{12838}}\right)$-strong-antifield.

So let us suppose that $P$ is a $\left(2,\gamma n^{\frac{1}{2}-\frac{1299}{12838}}\right)$-strong-antifield, where $\gamma$ is a constant to be specified. Apply Proposition \ref{theorem:propbelow} and Corollary \ref{theorem:below} to obtain a particular $\left(2,\gamma n^{\frac{1}{2}-\frac{1299}{12838}}\right)$-antifield $P^*$ for which 
\begin{equation}\label{eq:below}
T:=I_3\left(P^*,L(P^*)\right)\gg n^{\frac{5}{2}-15 \epsilon}
\end{equation}

and for which Proposition \ref{theorem:above} is applicable so long as 
\begin{equation}\label{eq:req}
\gamma n^{\frac{1}{2}-\frac{1299}{12838}} \leq \frac{\gamma_1T^{65}}{|A|^{130}|B|^{194}}
\end{equation}
where $\gamma_1$ is an absolute constant. Note also that
\begin{equation}\label{eq:abound}
|A|,|B|\ll n^{\frac{1}{2}+\epsilon}
\end{equation}
Now, since $\epsilon \leq 1/12838$ and combining \eqref{eq:below} and \eqref{eq:abound}, we see that there is an absolute constant $\gamma_2$ such that $$n^{\frac{1}{2}-\frac{1299}{12838}}\leq n^{\frac{1}{2}-1299\epsilon} \leq \gamma_2 \frac{T^{65}}{|A|^{130}|B|^{194}}$$ So we can ensure that \eqref{eq:req} holds by taking $\gamma=\frac{\gamma_1}{\gamma_2}$. We therefore have by Proposition \ref{theorem:above} that
\begin{equation}\label{eq:above}
T\ll \max{\left\{\left|A\right|^{\frac{643}{321}}\left|B\right|^{\frac{961}{321}},\left|A\right|^{\frac{535}{267}}\left|B\right|^{\frac{799}{267}},\left|A\right|^{\frac{499}{249}}\left|B\right|^{\frac{743}{249}} \right\}} 
\end{equation}
Comparing \eqref{eq:below} and \eqref{eq:above}, plugging in \eqref{eq:abound}, and taking logs then yields $\epsilon\geq 1/12838$ as required. 

\subsection{The rest of this paper}

The proof of Theorem \ref{theorem:result} will be complete once Propositions \ref{theorem:propbelow} and \ref{theorem:above} have been established. Lemma Lemma \ref{theorem:key} is used for proving Propositions \ref{theorem:propbelow}. The proofs of these three results are the subject of the rest of the paper:

\begin{itemize}
\item Section \ref{section:aflemma} presents the proof of Lemma \ref{theorem:key}
\item Section \ref {section:BKTproof} presents the proof of Proposition \ref{theorem:propbelow}.
\item Section \ref{section:lemmata} collects some technical lemmata that will be useful when proving Proposition \ref{theorem:above}, some with proof and some without.
\item Finally, Section \ref{section:above} presents the proof of Proposition \ref{theorem:above}.
\end{itemize}

\section{Proving Lemma \ref{theorem:key}}\label{section:aflemma}
This section is concerned the proof of Lemma \ref{theorem:key}. Recall the statement of the lemma:

\begin{tabular}{|p{15cm}|}
\hline
\paragraph{Lemma \ref{theorem:key}}
Let $A \subseteq F$ be a $(1,\lambda)$-strong-antifield and let $B \subseteq F$. Suppose there is a cross-ratio-preserving injection $\tau:B \to A$ (i.e. an injection $\tau$ for which $X(\tau(b_1),\tau(b_2),\tau(b_3),\tau(b_4))=X(b_1,b_2,b_3,b_4)$ whenever $b_1,b_2,b_3,b_4 \in B$). Then $B$ is a $(1,\lambda)$-antifield. 
\\
\space
\\
\hline
\end{tabular}

For a set $A$, define $X(A)=\left\{X(a,b,c,d):a,b,c,d \in A, a \neq d, b \neq c\right\}$. To prove Lemma \ref{theorem:key} we will need the following intermediate result:

\begin{lemma}\label{theorem:cr}
Let $F$ be a field. Suppose $A \subseteq F$ and there is a subfield $G$ of $F$ for which $X(A)\subseteq G $. Then either $|A \cap (xG+y)| \leq 2$ for all $x,y \in F$, or there exist $x,y \in F$ such that $A \subseteq xG+y$.
\end{lemma}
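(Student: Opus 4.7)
The key fact I will exploit is that cross ratios are invariant under affine transformations: for $\phi(t)=xt+y$ with $x\neq 0$, the factor $x$ cancels between numerator and denominator, so $X(\phi(t_1),\phi(t_2),\phi(t_3),\phi(t_4))=X(t_1,t_2,t_3,t_4)$. My plan is to prove the following implication, which is equivalent to the stated dichotomy: if some affine copy $xG+y$ contains three or more elements of $A$, then it contains all of $A$.

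Fix $x,y$ with three distinct elements $a_1,a_2,a_3 \in A\cap(xG+y)$, so that $a_i=\phi(g_i)$ for distinct $g_1,g_2,g_3 \in G$. For an arbitrary $a \in A\setminus\{a_1,a_2,a_3\}$, I would set $g=(a-y)/x \in F$ and aim to show $g \in G$. The hypothesis $X(A)\subseteq G$ supplies $c:=X(a_1,a_2,a_3,a)\in G$, and by affine invariance this equals $X(g_1,g_2,g_3,g)$. All the data $c,g_1,g_2,g_3$ now lie in the subfield $G$.

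The final step is to solve the equation $c=\frac{(g_1-g_2)(g_3-g)}{(g_1-g)(g_3-g_2)}$ for $g$. Clearing denominators yields a linear equation in $g$ whose solution is a rational expression in $c,g_1,g_2,g_3$, and hence lies in $G$, provided the coefficient of $g$ is nonzero. A short calculation shows this coefficient vanishes only when $c=(g_1-g_2)/(g_3-g_2)$, a value that $X(g_1,g_2,g_3,g)$ demonstrably never attains when $g_1,g_2,g_3$ are distinct (this would force $g_3=g_1$ after a one-line simplification). So $g \in G$ and hence $a\in xG+y$, completing the argument.

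I do not anticipate a genuine obstacle: the proof is driven entirely by two ingredients, namely the affine invariance of the cross ratio and the explicit solvability of the cross-ratio equation over $G$, plus a small amount of bookkeeping to dismiss the degenerate cases in which $a$ coincides with one of the $a_i$ (where the cross ratio is undefined), and such $a$ already lie in $xG+y$ trivially.
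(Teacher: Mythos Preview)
Your proposal is correct and follows essentially the same route as the paper: assume three points of $A$ lie in some $xG+y$, compute the cross ratio of these three with an arbitrary fourth point of $A$, and use that this cross ratio lies in $G$ to force the fourth point into $xG+y$. The paper frames this as a contradiction (assume $d\notin xG+y$ and show the cross ratio falls outside $G$), while you solve directly for $g$; the underlying computation is the same.
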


\begin{proof}
We show that if $|A \cap (xG+y)| \geq 3$ then $A \subseteq xG+y$. Let $a,b,c$ be three distinct elements of  $A \cap \left(xG+y\right)$ and suppose for a contradiction that $A \nsubseteq xG+y$. Then we can find $d \in A$ with $d \notin xG+y$. So we have 
\begin{align*}
a=g_1x+y\\
b=g_2x+y\\
c=g_3x+y\\
d=g_4x+z
\end{align*}
where $g_1,g_2,g_3,g_4 \in G$ and $\frac{z-y}{x} \notin G$. Moreover, since $a,b,c$ are distinct, we know that $g_1,g_2,g_3$ are distinct. Finally, we know that $a,b,c \neq d$. We then know by assumption that 

$$\frac{(a-b)(c-d)}{(a-d)(c-b)}\in G$$

But we also have

\begin{align*}
\frac{(a-b)(c-d)}{(a-d)(c-b)}&=\frac{x(g_1-g_2)(x(g_3-g_4)+(y-z))}{(x(g_1-g_4)+(y-z))(x(g_3-g_2))}= \left(\frac{g_1-g_2}{g_3-g_2}\right) \frac{g_3-g_4 + \frac{y-z}{x}}{g_1-g_4 + \frac{y-z}{x}}
\end{align*}

Since $g_1,g_2$ and $g_3$ are distinct, this means that $$\frac{g_3-g_4 + \frac{y-z}{x}}{g_1-g_4 + \frac{y-z}{x}} \in G$$ and so there exists $g_5 \in G$ with $$\frac{g_3-g_4 + \frac{y-z}{x}}{g_1-g_4 + \frac{y-z}{x}} = g_5$$

We now split into two cases, according to whether or not $g_5=1$. If $g_5=1$ then we obtain $g_3=g_1$, which contradicts the fact that these two elements are distinct. If $g_5 \neq 1$ then we obtain  $$\frac{y-z}{x}=\frac{g_5(g_1-g_4)-g_3+g_4}{1-g_5} \in G $$ which contradicts the fact that $\frac{y-z}{x}\notin G$. Either way, we are done.
\end{proof}

\begin{corollary}\label{theorem:coroll}
Let $F$ be a field, $G$ be a subfield of $F$, $A \subseteq F$ be a $(1,\lambda)$-strong-antifield, and $A' \subseteq A$ be such that $|A'|\geq \max\left\{\lambda,|G|^{\frac{1}{2}}\right\}$. Then $X(A')\nsubseteq G$. 
\end{corollary}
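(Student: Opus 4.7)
My plan is to argue by contradiction: suppose that $X(A') \subseteq G$, and derive $|A'| < \max\{\lambda, |G|^{1/2}\}$. The key observation is that Lemma \ref{theorem:cr} applies directly to the subset $A'$ (with the same subfield $G$), since its only hypothesis is that $X(A') \subseteq G$. This produces a dichotomy: either (a) $|A' \cap (xG+y)| \leq 2$ for every $x,y \in F$, or (b) there exist $x,y \in F$ with $A' \subseteq xG+y$.

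In case (a), specializing to $x=1$ shows that each translate $G+y$ contains at most two elements of $A'$, so $A \supseteq A'$ must meet at least $|A'|/2$ distinct translates $G+b$ of $G$. The strong-antifield property then forces this count to be strictly less than $\max\{\lambda,|G|^{1/2}\}/2$, which together with the hypothesis on $|A'|$ yields the desired contradiction.

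For case (b), the analysis splits on whether the coset direction $xG$ coincides with $G$ as a $G$-subspace of $F$. If $x \in G^{\times}$ then $xG = G$, so $A' \subseteq G+y$, and the antifield bound $|A \cap (G+y)| \leq \max\{\lambda,|G|^{1/2}\}$ directly controls $|A'|$. If instead $x \notin G$, then $xG$ and $G$ are distinct one-dimensional $G$-subspaces of $F$ and therefore meet only at $0$, so every translate $G+z$ meets $xG+y$ in at most one point; consequently the elements of $A' \subseteq xG+y$ inhabit $|A'|$ distinct translates of $G$, and the strong-antifield count again delivers the contradiction. The remaining case $x=0$ is trivial. The main subtlety I anticipate is the saturated boundary inside sub-case $x \in G^{\times}$, where the antifield inequality gives only $|A'| \leq \max\{\lambda,|G|^{1/2}\}$ rather than a strict bound; this should be absorbed by the strong-antifield condition, since $G+y$ is then one of the strictly-fewer-than-$\max\{\lambda,|G|^{1/2}\}/2$ translates that $A$ is allowed to intersect.
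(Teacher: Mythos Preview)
Your argument follows the same route as the paper's: assume $X(A') \subseteq G$, apply Lemma~\ref{theorem:cr} to $A'$, and handle the two alternatives via the antifield and strong-antifield conditions. Your case (a) is exactly the paper's treatment of its ``latter case''.

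In case (b) you do unnecessary work. The $(1,\lambda)$-antifield condition bounds $|A \cap (aG+b)|$ for \emph{all} $a,b \in F$, not only for translates with $a=1$; so once $A' \subseteq xG+y$ you immediately have $|A'| \leq |A \cap (xG+y)| \leq \max\{\lambda,|G|^{1/2}\}$, and there is no need to split on whether $x$ lies in $G$. The paper handles this case in one line in exactly this way. Your sub-case analysis ($x \in G^\times$, $x \notin G$, $x=0$) is correct but redundant. As for the boundary worry you flag: your suggested patch---that $G+y$ is ``one of'' the permitted translates---does not actually yield a contradiction, since meeting a single translate violates nothing in the strong-antifield definition. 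However, the paper's own proof arrives at the identical non-strict inequality $|A \cap (aG+b)| \geq \max\{\lambda,|G|^{1/2}\}$ in this case and treats it as a contradiction, so this is a shared looseness in the statement/proof rather than a defect specific to your argument.
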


\begin{proof}
Suppose that there exists $A' \subseteq A$ with $|A'| \geq \max \left\{\lambda,|G|^{\frac{1}{2}}\right\}$ and $X(A') \subseteq G$. Then by Lemma \ref{theorem:cr}, either $A' \subseteq aG+b$ for some $a,b \in F$, or $\left|A' \cap (aG+b)\right|\leq 2$ for all $a,b \in F$. 

In the former case, we have $A' \subseteq A \cap \left(aG+b\right)$ and so $\left|A \cap \left(aG+b\right)\right|\geq \max \left\{\lambda,|G|^{\frac{1}{2}}\right\}$. In the latter case we have $|A' \cap (G+b)| \leq 2$ for all distinct translates $G+b$ of $G$, which means that $A'$ and therefore $A$ intersects at least $\max\left\{\lambda,|G|^{\frac{1}{2}}\right\}/2$ such translates. 

Either way, we contradict the fact that $A$ is a $(1,\lambda)$-strong-antifield and are therefore done.
\end{proof}

We are now in a position to prove Lemma \ref{theorem:key}.

\paragraph{Proof of Lemma \ref{theorem:key}}
Suppose for a contradiction that there is a subfield $G$ of $F$ and elements $a,b \in F$ such that $$\left|B \cap (aG+b)\right|\geq \max\left\{\lambda,|G|^{\frac{1}{2}}\right\}$$
Let $B'=B \cap (aG+b)$. Then we have $\tau(B')\subseteq A$ and $|\tau(B')|=|B'|\geq \max\left\{\lambda,|G|^{\frac{1}{2}}\right\}$, but also $X(\tau(B'))=X(B')\subseteq G$. This contradicts Corollary \ref{theorem:coroll} and so we are done. This completes the proof of Lemma \ref{theorem:key}.

\section{Proof of Proposition \ref{theorem:propbelow}}\label{section:BKTproof}
We will now use Lemma \ref{theorem:key} to prove Proposition \ref{theorem:propbelow}.
Recall the statement of Proposition \ref{theorem:propbelow}:

\begin{tabular}{|p{15cm}|}
\hline
\paragraph{Proposition \ref{theorem:propbelow}}
Let $F$ be a field, and let $P$ and $L$ be a set of lines and points respectively in $F \times F$ with $|P|=|L|=n$ such that $I(P,L)=n^{\frac{3}{2}-\epsilon}$ for some $\epsilon>0$. Let $\lambda \geq 0$. Then, if $P$ is a $(2,\lambda)$-strong-antifield there exist:

\begin{enumerate}
\item Sets $A,B \subseteq F$ with $|A|,|B| \ll n^{\frac{1}{2}+\epsilon}$ and $0 \notin B$
\item A set $L_A$ of lines through the origin with gradients in $A$.
\item A set $L_B$ of horizontal (i.e. gradient $0$) lines with $y$-intercepts in $B$
\item A $(2,\lambda)$-antifield $P^*$ with $|P^*|\leq n$, the points of which each lie on the intersection of a line in $L_A$ with a line in $L_B$.
\end{enumerate}
such that $$I\left(P^*,L(P^*)\right)\gg n^{\frac{3}{2}-5\epsilon}$$
where $L(P^*)$ is the set of lines determined by pairs of points in $P^*$.
\\
\space
\\
\hline
\end{tabular}

Recall that for a point $p$ and a line $l$ we define $\delta_{pl}$ to be $1$ if $p \in l$ and $0$ otherwise. We initially follow \cite{BKT} and \cite{HR}.

The first step is to show that we may assume every point in $P$ is incident to $\gg n^{\frac{1}{2}-\epsilon}$ and $\ll n^{\frac{1}{2}+\epsilon}$ lines in $L$. Indeed, let $P_+=\left\{p \in P: p \text{ is incident to} \geq 4n^{\frac{1}{2}+\epsilon} \text{ lines } l \in L\right\}$. Then: 
\begin{align*}
I\left(P_+,L\right)&=\sum_{p \in P_+}\sum_{l \in L}\delta_{pl}
\leq \frac{1}{4 n^{\frac{1}{2}+\epsilon}}\sum_{p \in P_+} \left(\sum_{l \in L}\delta_{pl}\right)^2
=\frac{1}{4 n^{\frac{1}{2}+\epsilon}}\sum_{l,l' \in L}\sum_{p \in P_+}\delta_{pl}\delta_{pl'}
\leq\frac{n^{\frac{3}{2}-\epsilon}}{2}
\end{align*}

Similarly, let $P_-=\left\{p \in P: p \text{ is incident to} \leq \frac{n^{\frac{1}{2}-\epsilon}}{3} \text{ lines } l \in L \right\}$. Then: 
\begin{align*}
I\left(P_-,L\right)=\sum_{p \in P_-}\sum_{l \in L}\delta_{pl}
\leq \sum_{p \in P_-}\frac{n^{\frac{1}{2}-\epsilon}}{3}
\leq \frac{n^{\frac{3}{2}-\epsilon}}{3}
\end{align*}

So between them $P_+$ and $P_-$ contribute only five sixths of the $n^{\frac{3}{2}-\epsilon}$ incidences. Without loss of generality we shall discard them and assume from now on that $|P| \leq n$, and that every point $p \in P$ is incident to $\gg n^{\frac{1}{2}-\epsilon}$ and $\ll n^{\frac{1}{2}+\epsilon}$ lines in $L$.

Let $L_1$ be the set of ``rich'' lines in $L$ defined by 
$$L_1=\left\{l \in L:l \text{ is incident to} \geq \frac{n^{\frac{1}{2}-\epsilon}}{20}\text{ points } p \in P\right\}$$

Let $P_1$ be the set of points in $P$ that are ``bushy'' relative to $L_1$, defined by 
$$P_1=\left\{p \in P: p \text{ is incident to } \geq \frac{n^{\frac{1}{2}-\epsilon}}{20}\text{ lines in } L_1\right\}$$ 

We need to check that $P_1$ is non-empty. Note firstly that 

$$I(P,L\backslash L_1)=\sum_{p \in P}\sum_{l \in L \backslash L_1}\delta_{pl}\leq \sum_{l \in L \backslash L_1}\frac{n^{\frac{1}{2}-\epsilon}}{20} \leq \frac{n^{\frac{3}{2}-\epsilon}}{20} $$

and therefore $I(P,L_1)\gg I(P,L)$. Now note that

$$
I(P \backslash P_1,L_1)= \sum_{p \in P \backslash P_1} \sum_{l \in L_1}\delta_{pl}
< \sum_{p \in P \backslash P_1}\frac{n^{\frac{1}{2}-\epsilon}}{20}
\leq \frac{n^{\frac{3}{2}-\epsilon}}{20}
$$

This means that $I(P_1,L_1)\gg I(P,L_1)\gg I(P,L)$ and so $P_1$ is certainly non-empty. Now for each $p \in P_1$ let $P_p$ be the set of points in $P$ that are joined to $p$ by a line in $L_1$. We have:
\begin{align*}
\left|P_p \right|&=\sum_{q \in P}\sum_{l \in L_1}\delta_{pl}\delta_{ql}
=\sum_{l \in L_1}\delta_{pl}\sum_{q \in P}\delta_{ql}
\gg n^{\frac{1}{2}-\epsilon}\sum_{l \in L_1}\delta_{pl}
\gg n^{1-2\epsilon}
\end{align*}
This means that:
\begin{align*}
\left|P_1\right|n^{1-2\epsilon}\ll \sum_{p \in P_1}\left|P_p\right|
\leq \sqrt{\left|P_1\right|}\sqrt{\sum_{p,q \in P_1}\left|P_p \cap P_q\right|}
\end{align*}
where the second inequality follows by Cauchy-Schwartz. So we have:

\begin{equation}\label{equat:cspigeon}
\left|P_1\right|n^{2-4\epsilon}\ll \sum_{p,q \in P_1}\left|P_p \cap P_q\right|
\end{equation}

For each $p \in P$ define $x_p$ to be the $x$-co-ordinate of $p$. And for each $x \in F$ define $P^x=\left\{p \in P:x_p=x\right\}$. It is easy to see that $|P^x|n^{\frac{1}{2}-\epsilon} \ll I(P^x,L) \leq 2n$ and so we deduce that $|P^x|\ll n^{\frac{1}{2}+\epsilon}$ for every $x \in F$. Plugging this into \eqref{equat:cspigeon} yields

$$|P_1|n^{2-4 \epsilon} \ll \sum_{p,q \in P_1:x_p \neq x_q}\left|P_p \cap P_y\right|+ \sum_{p \in P_1}\sum_{q \in P^{x_p}}\left|P_p \cap P_q\right| \ll \sum_{p,q \in P_1:x_p \neq x_q}\left|P_p \cap P_y\right|+ |P_1|n^{\frac{3}{2}+\epsilon}$$

We can therefore fix two distinct points $p,q \in P_1$ with $x_p\neq x_q$ such that 

\begin{align*}
\left|P_{p} \cap P_{q}\right|&\gg \frac{n^{2-4\epsilon}}{\left|P\right|}
\gg n^{1-4\epsilon}
\end{align*}

Now let $P'=P_{p} \cap P_{q}$ and note that

\begin{align*}
I(P',L)=\sum_{p \in P'}\sum_{l \in L}\delta_{pl} \geq \left|P'\right|n^{\frac{1}{2}-\epsilon}\gg n^{\frac{3}{2}-5 \epsilon}
\end{align*}

Since $I(P^{x_p},L)\leq n$ we can discard all points in $P^{x_p}$ other than $p$ , and thereby assume $P^{x_p}=\left\{p\right\}$. 

At this point we diverge from \cite{BKT} and \cite{HR}. All we shall carry forward are the facts that:

\begin{enumerate}
\item $I(P',L)\gg n^{\frac{3}{2}-5 \epsilon}$.
\item $P'$ is a $(2,\lambda)$-strong-antifield.
\item There are two points $p,q$, lying on distinct vertical lines, such that $P'=P_p \cap P_q$ where $P_p$ is a set of points lying on $O(n^{\frac{1}{2}+\epsilon})$ lines through $p$, and $P_q$ is a set of points lying on $O(n^{\frac{1}{2}+\epsilon})$ lines through $q$
\item No point in $P'$ lies on the vertical line through $p$.
\end{enumerate}

These facts are unaffected by translation of $P'$ and so without loss of generality we shall assume that $p$ is in fact the origin. 

Recall that the \textbf{projective plane} $\mathbb{P}^2(F)$ is defined to be $F^3 \backslash \left(0,0,0\right)$, modulo dilations. We embed $F \times F$ in $\mathbb{P}^2(F)$ by identifying $(x,y) \in F \times F$ with $(x,y,1) \in \mathbb{P}^2(F)$. This accounts for all elements of $\mathbb{P}^2(F)$ apart from those of the form $(x,y,0)$; these are said to lie on the \textbf{line at infinity}. For our purposes, the only such point we need consider is the point $(1,0,0)$. Every line incident to this point has gradient 0, and is therefore horizontal. A \textbf{projective transformation} is an invertible linear map from $\mathbb{P}^2(F)$ to itself, i.e. a $3 \times 3$ non-singular matrix, and has the important property that it maps points to points and lines to lines.

Returning to the proof, we apply the projective transformation $\tau$ given by
\begin{equation*}
\tau=\left(
\begin{array}{ccc}
0&0&1\\
0&1&0\\
1&0&0	
\end{array}\right)
\end{equation*}

Note that:
\begin{enumerate}

\item $I(\tau(P'), L(\tau(P')))\geq I(\tau(P'),\tau(L))=I(P',L)\gg n^{\frac{3}{2}-5 \epsilon}$

\item $\tau$ maps the $y$-axis to the line at infinity. In particular, it maps the origin (which we have assumed to be $p$) to the point at infinity with gradient $0$, and so the points in $\tau(P_p)$ lie on $O(n^{\frac{1}{2}+\epsilon})$ horizontal lines. 

\item Since $P'$ has no points on the $y$-axis, the image $\tau(P')$ is contained in $F \times F$.

\item Since $q$ does not lie on the $y$-axis, the point $\tau(q)$ lies in $F \times F$ and not the line at infinity. Every point in $\tau(P_q)$ lies on one of $O(n^{\frac{1}{2}+\epsilon})$ lines through $\tau(q)$.  

\item $\tau(x,y)=\left(\frac{1}{x},\frac{y}{x}\right)$ for each point $(x,y)$ with $x \neq 0$. So the map $x \mapsto x^{-1}$ is a cross-ratio-preserving injection from $\left\{x:(x,y) \in \tau(P') \right\}$ to $\left\{x:(x,y) \in P' \right\}$. Since $P'$ is a $(2,\lambda)$-strong-antifield, Lemma \ref{theorem:key} implies that $\tau(P)$ is a $(2,\lambda)$-antifield.
\end{enumerate}

From the above we see that we have a $(2,\lambda)$-antifield $P^*=\tau(P')$ such that:

\begin{enumerate}
\item $I(P^{*},L(P^*))\gg n^{\frac{3}{2}-5 \epsilon}$
\item Each point in $P^{*}$ lies on
\begin{enumerate}
\item one of $O(n^{\frac{1}{2}+\epsilon})$ lines that pass through a single point $s$ in $F \times F$.
\item one of $O(n^{\frac{1}{2}+\epsilon})$ horizontal lines.
\end{enumerate}
\end{enumerate}

The properties above are again invariant under translation and so without loss of generality we may assume that $s$ is the origin. And since each horizontal line in $P^*$ contributes at most $n$ incidences we can discard points to assume that $0 \notin B$. We then take $A$ to be the set of gradients of the $O(n^{\frac{1}{2}+\epsilon})$ lines through the origin, and $B$ to be the $y$-intercepts of the $O(n^{\frac{1}{2}+\epsilon})$ horizontal lines. This completes the proof of the proposition. 

\section{Lemmata for proving Proposition \ref{theorem:above}}\label{section:lemmata}
This section collects the technical lemmata that will be used to prove Proposition \ref{theorem:above}. 

\subsection{Pivoting results}

We will make use of some `pivoting' results. The first, Lemma \ref{theorem:pivot2}, was applied in the Helfgott-Rudnev proof \cite{HR}, and before that in e.g. \cite{GK}, \cite{garaev}, \cite{KSprime}, \cite{shen} and \cite{li}. It is stated here without proof.

\begin{lemma}[Pivoting lemma 1]\label{theorem:pivot2}
Let $F$ be a field, let $Z \subseteq F$ and let $R(Z)=\frac{Z-Z}{Z-Z}$. Let $a,b \in F$. Then if $\left|R(Z)\right|\geq \left|Z\right|^2$ there exist $z_1,z_2,z_3,z_4 \in aZ+b$ such that for all $Z' \subseteq Z$ with $\left|Z'\right|\gg \left|Z\right|$ we have $\left|Z\right|^2 \approx \left|\left(z_1-z_2\right)Z'+\left(z_3-z_4\right)Z'\right|$
\end{lemma}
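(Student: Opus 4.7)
My plan is to first reduce the statement to the case $a=1$, $b=0$: the affine map $z \mapsto az+b$ sends $Z$ to $aZ+b$ and changes differences only by the scalar factor $a$, so it suffices to find $d_1, d_2 \in Z-Z$ with $|d_1 Z' + d_2 Z'| \approx |Z|^2$ for every $Z' \subseteq Z$ with $|Z'| \gg |Z|$. The required $z_1, z_2, z_3, z_4 \in aZ+b$ can then be chosen so that $z_1 - z_2 = a d_1$ and $z_3 - z_4 = a d_2$, and multiplication of the sumset by the non-zero scalar $a$ preserves its cardinality.

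The upper bound $|d_1 Z' + d_2 Z'| \leq |Z'|^2 \leq |Z|^2$ is immediate. For the matching lower bound I will apply the standard Cauchy--Schwarz estimate
\[
|d_1 Z' + d_2 Z'| \;\geq\; \frac{|Z'|^4}{\mathcal{E}(Z')},
\]
where $\mathcal{E}(Z')$ counts quadruples $(z_1,z_2,z_3,z_4) \in (Z')^4$ with $d_1 z_1 + d_2 z_2 = d_1 z_3 + d_2 z_4$. A direct case split, setting $r := d_1/d_2$, shows that $\mathcal{E}(Z') = |Z'|^2 + N_r(Z')$, where $N_r(Z')$ counts quadruples $(a,b,c,d) \in (Z')^4$ with $a \neq c$, $b \neq d$, and $b - d = r(a - c)$.

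The heart of the argument is then to pick $r \in R(Z)\setminus\{0\}$ so that $N_r(Z) \ll |Z|^2$. Since each $(a,b,c,d) \in Z^4$ with $a \neq c$ and $b \neq d$ determines a unique ratio $r = (b-d)/(a-c) \in R(Z)\setminus\{0\}$, we have $\sum_{r \in R(Z)\setminus\{0\}} N_r(Z) \leq |Z|^4$. Combined with the hypothesis $|R(Z)| \geq |Z|^2$, pigeonhole supplies an $r$ with $N_r(Z) \ll |Z|^2$. I then write $r = d_1/d_2$ with $d_1, d_2 \in Z-Z$ (both non-zero, since $r \neq 0$ and $r$ is finite) and lift to $z_1,z_2,z_3,z_4 \in aZ+b$ as above. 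For any $Z' \subseteq Z$ with $|Z'| \gg |Z|$, the trivial monotonicity $N_r(Z') \leq N_r(Z) \ll |Z|^2$ combines with $|Z'|^2 \ll |Z|^2$ to give $\mathcal{E}(Z') \ll |Z|^2$, and hence $|d_1 Z' + d_2 Z'| \gg |Z'|^4/|Z|^2 \gg |Z|^2$ as required.

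The step I expect to be the main obstacle is conceptual rather than technical: one needs a \emph{single} pivot $r$ that serves every sufficiently dense subset $Z' \subseteq Z$ simultaneously, and this uniformity is precisely what the monotonicity $N_r(Z') \leq N_r(Z)$ buys us. Once that point is recognised, the remaining manipulations are a routine combination of Cauchy--Schwarz and pigeonhole, with the hypothesis $|Z'| \gg |Z|$ serving exactly to absorb the additive $|Z|^2$ term into the energy without spoiling the bound.
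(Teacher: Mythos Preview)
The paper does not actually prove this lemma; it is quoted from earlier sources (\cite{HR}, \cite{GK}, \cite{garaev}, \cite{KSprime}, \cite{shen}, \cite{li}) and explicitly ``stated here without proof''. Your argument is correct and is essentially the standard one that those references use: pigeonhole on the identity $\sum_{r}N_r(Z)\leq |Z|^4$ together with the hypothesis $|R(Z)|\geq |Z|^2$ to locate a ratio $r$ with $N_r(Z)\ll |Z|^2$, and then Cauchy--Schwarz on the representation function of $d_1 Z'+d_2 Z'$. The crucial observation that a single choice of $r$ works uniformly for every large $Z'\subseteq Z$ via the monotonicity $N_r(Z')\leq N_r(Z)$ is exactly the point of the lemma, and you have identified it clearly.

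One cosmetic slip: with $r=d_1/d_2$ the equation $d_1z_1+d_2z_2=d_1z_3+d_2z_4$ rearranges to $z_4-z_2=r(z_1-z_3)$, so in your notation it is $d-b=r(a-c)$ rather than $b-d=r(a-c)$. This is harmless (the bijection $(a,b,c,d)\mapsto(a,d,c,b)$ shows $N_r=N_{-r}$, and in any case you are choosing $r$ by pigeonhole), but you may want to correct the sign for tidiness. You should also note, if only in passing, that the reduction to $a=1$, $b=0$ requires $a\neq 0$; when $a=0$ the conclusion is vacuous.
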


The next lemma is a quick and well-known result that is a necessary tool for the lemma that follows it:

\begin{lemma}
Let $F$ be a field, let $Z \subseteq F$ and let $R(Z)=\frac{Z-Z}{Z-Z}$. If $x \notin R(Z)$ then $\left|Z+xZ\right|\approx |Z|^2$.
\end{lemma}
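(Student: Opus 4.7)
The plan is to prove the stronger claim that $|Z + xZ| = |Z|^2$ exactly (from which $\approx$ follows immediately), by showing the natural sum map is injective whenever $x \notin R(Z)$.

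First, I would note the trivial upper bound $|Z + xZ| \leq |Z|^2$ since every element of $Z + xZ$ is $z_1 + xz_2$ for some pair $(z_1, z_2) \in Z \times Z$, giving at most $|Z|^2$ values. For the matching lower bound, I would consider the map $\phi: Z \times Z \to Z + xZ$ defined by $\phi(z_1, z_2) = z_1 + xz_2$ and show it is injective.

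Suppose $\phi(z_1, z_2) = \phi(z_1', z_2')$, i.e.\ $z_1 + xz_2 = z_1' + xz_2'$. Rearranging gives $z_1 - z_1' = x(z_2' - z_2)$. If $z_2 \neq z_2'$, then $z_2' - z_2$ is a nonzero element of $Z - Z$, and $z_1 - z_1' \in Z - Z$, so
\[
x = \frac{z_1 - z_1'}{z_2' - z_2} \in \frac{Z-Z}{Z-Z} = R(Z),
\]
contradicting the hypothesis $x \notin R(Z)$. Hence $z_2 = z_2'$, and then the equation forces $z_1 = z_1'$ as well. Thus $\phi$ is injective, and $|Z + xZ| = |Z|^2$.

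There is essentially no obstacle here: the only thing to check is that the ``edge case'' $z_2 = z_2'$ does not give a spurious collision, but in that case the original equation collapses to $z_1 = z_1'$ automatically. One minor sanity check is that $0 \in R(Z)$ as long as $|Z| \geq 2$ (take numerator $z-z$ and a nonzero denominator), so $x \neq 0$ is built into the hypothesis, which is consistent with the fact that $x = 0$ would give $|Z + xZ| = |Z|$ rather than $|Z|^2$. This completes the proof plan.
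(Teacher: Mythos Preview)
Your proof is correct and follows essentially the same approach as the paper: both argue that a collision $z_1+xz_2=z_1'+xz_2'$ with $z_2\neq z_2'$ would force $x\in R(Z)$, so the sum map is injective. In fact your version is slightly cleaner, since you conclude $|Z+xZ|=|Z|^2$ exactly, whereas the paper inadvertently writes $|Z|(|Z|-1)/2$ for the image size (a harmless slip, since that is still $\gg |Z|^2$).
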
 
\begin{proof}
Clearly $\left|Z+xZ\right|\ll |Z|^2$, so we seek $\left|Z+xZ\right|\gg |Z|^2$. If there exist $z_1,z_2,z_3,z_4 \in Z$ with $z_2 \neq z_4$ and $z_1+xz_2=z_3+xz_4$, then we can write $x=\frac{z_1-z_3}{z_2-z_4}$, which contradicts the fact that $x \notin R(Z)$. So there is only one way of writing each elemnent $v\in Z+ xZ$ in the form $v=z_1+xz_2$ with $z_1,z_2 \in Z$. We therefore have $|Z+xZ|=\frac{|Z|\left(|Z|-1\right)}{2}\gg|Z|^2$, as required.
\end{proof}

Lemma \ref{theorem:pivot1}, due to Katz and Shen \cite{KS}, generalises an approach that is traditionally used in conjunction with Lemma \ref{theorem:pivot2}. The generalistation means that the result allows for the possibility of nontrivial additive subgroups.

\begin{lemma}[Pivoting lemma 2]\label{theorem:pivot1}
Let $F$ be a field and let $Z \subseteq F$ be finite such that $R(Z)=\frac{Z-Z}{Z-Z}$ is not a subfield of $F$. Let $a,b \in F$. Then either

\begin{enumerate}
\item \textbf{$\mathbf{R(aZ+b)}$ is not closed under multiplication}, in which case there exist $x_1,x_2,z_1,z_2,z_3,z_4 \in Z$ such that $\left|Z'\right|^2\leq\left|x_1\left(z_1-z_2\right)Z'-x_2\left(z_1-z_2\right)Z'+x_1\left(z_3-z_4\right)Z'\right|$ for all $Z' \subseteq Z$.

\item \textbf{$\mathbf{R(aZ+b)}$ is closed under multiplication but is not closed under addition}, in which case there exist $y_1,y_2,y_3,y_4 \in Z$ such that  $\left|Z'\right|^2 \leq \left|\left(y_1-y_2\right)Z'+\left(y_3-y_4\right)Z'+\left(y_3-y_4\right)Z' \right|$ for all $Z' \subseteq Z$. 

\end{enumerate}
\end{lemma}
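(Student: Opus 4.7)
The plan begins by observing that $R(aZ+b) = R(Z)$: additive shifts cancel within each difference, and the scaling factor $a$ cancels between numerator and denominator of every ratio. So the hypothesis asserts that $\Sigma := R(aZ+b)$ is not a subfield of $F$. Now $\Sigma$ contains $0$ and $1$ and is manifestly closed under negation (swap the two numerator entries) and under multiplicative inversion (swap numerator and denominator). Since $F$ is a field and $\Sigma$ is finite, its failure to be a subfield forces failure of either additive or multiplicative closure, yielding the stated dichotomy.

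For Case 2 I would pick $\sigma_1,\sigma_2 \in \Sigma$ with $\sigma_1+\sigma_2 \notin \Sigma$. Multiplicative closure together with inversion closure gives $\sigma_2^{-1}\Sigma = \Sigma$, so dividing yields $r+1 \notin \Sigma$ where $r := \sigma_2^{-1}\sigma_1 \in \Sigma$. Writing $r = (y_1-y_2)/(y_3-y_4)$ and applying the previous lemma (using that $r+1 \notin R(Z) \supseteq R(Z')$) gives $|Z' + (r+1)Z'| \gg |Z'|^2$ for every $Z' \subseteq Z$. The pointwise identity $(r+1)z = rz + z$ yields the set inclusion $(r+1)Z' \subseteq rZ' + Z'$, so $|Z' + rZ' + Z'| \geq |Z' + (r+1)Z'| \gg |Z'|^2$; rescaling by the bijection $\cdot (y_3-y_4)$ produces the claimed Case 2 inequality.

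For Case 1 I would follow the same template. Pick $\sigma_1,\sigma_2 \in \Sigma$ with $\sigma_1\sigma_2 \notin \Sigma$. Dividing the target set $x_1(z_1-z_2)Z' - x_2(z_1-z_2)Z' + x_1(z_3-z_4)Z'$ through by $x_1(z_1-z_2)$ rewrites it as $Z' + \alpha Z' + \beta Z'$ with $\alpha = -x_2/x_1$ and $\beta = (z_3-z_4)/(z_1-z_2) \in \Sigma$. The identity $(\alpha+\beta)z = \alpha z + \beta z$ gives $(\alpha+\beta)Z' \subseteq \alpha Z' + \beta Z'$, so by the previous lemma $|Z' + \alpha Z' + \beta Z'| \geq |Z' + (\alpha+\beta)Z'| \gg |Z'|^2$ as soon as $\alpha+\beta \notin \Sigma$. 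The task therefore reduces to choosing $x_1,x_2,z_1,\ldots,z_4 \in Z$ so that the combination $-x_2/x_1 + (z_3-z_4)/(z_1-z_2)$ lies outside $\Sigma$, which one arranges via the failure of multiplicative closure.

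The main obstacle is justifying this final existence claim in Case 1. The plan is a case split: if $Z/Z \not\subseteq \Sigma$, take $x_1,x_2 \in Z$ with $x_2/x_1 \notin \Sigma$ and verify (using additive-structure considerations in $\Sigma$ or a direct numerator-denominator argument) that $\alpha+\beta \notin \Sigma$; otherwise $Z/Z \subseteq \Sigma$ and one instead exploits $\Sigma \cdot \Sigma \not\subseteq \Sigma$ via the decomposition $\Sigma = \bigcup_{d \in Z-Z}(Z-Z)/d$, using a pigeonhole over pairs of slices to produce a failure realizable in the required form. This is the technical refinement of Katz--Shen \cite{KS} over the older pivoting result Lemma \ref{theorem:pivot2}, which required the stronger hypothesis $|R(Z)| \geq |Z|^2$ and so did not have to overcome this difficulty.
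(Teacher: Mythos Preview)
Your reduction $R(aZ+b)=R(Z)$, the closure observations, and the whole of Case~2 are correct and match the paper's argument essentially verbatim (the paper also divides by one summand to reduce $\sigma_1+\sigma_2\notin R(Z)$ to $r+1\notin R(Z)$ via multiplicative closure).

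The gap is in Case~1. After rescaling by $x_1(z_1-z_2)$ you chose to merge the second and third summands, $\alpha Z'+\beta Z'\supseteq(\alpha+\beta)Z'$, which leaves you needing the \emph{additive} condition $-x_2/x_1+(z_3-z_4)/(z_1-z_2)\notin\Sigma$. That sits badly with the hypothesis, which is failure of \emph{multiplicative} closure; your proposed case split on whether $Z/Z\subseteq\Sigma$ does not close the argument (the second branch, ``pigeonhole over pairs of slices'', is not a proof, and it is not clear how a product $\sigma_1\sigma_2\notin\Sigma$ would ever be massaged into the additive form you need). The fix is simply to merge the \emph{first} two summands instead: $Z'+\alpha Z'\supseteq(1+\alpha)Z'=\tfrac{x_1-x_2}{x_1}Z'$, after which rescaling by $\beta$ shows it suffices to have
\[
\frac{x_1-x_2}{x_1}\cdot\frac{z_1-z_2}{z_3-z_4}\ \notin\ \Sigma,
\]
a multiplicative condition that is exactly aligned with the hypothesis. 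This is what the paper does.

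To realise that condition, the paper uses a short telescoping trick you are missing. Given $\sigma_1\sigma_2\notin\Sigma$ with $\sigma_1=\tfrac{x_1-x_2}{x_3-x_4}$, write
\[
\sigma_1=\frac{x_1-x_2}{x_1}\cdot\frac{x_1}{x_1-x_3}\cdot\frac{x_1-x_3}{x_4}\cdot\frac{x_4}{x_3-x_4},
\]
so that $\sigma_1\sigma_2$ is a product of factors each having a \emph{single} element of $Z$ (not a difference) in either numerator or denominator, times $\sigma_2\in\Sigma$. Since the full product lies outside $\Sigma$ while the rightmost factor $\sigma_2$ lies in $\Sigma$, walking leftwards there is a first step at which the running partial product leaves $\Sigma$; this yields a single factor $f$ and an element $r\in\Sigma$ with $fr\notin\Sigma$. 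Using that $\Sigma$ is closed under inversion and negation (and sub-telescoping $\tfrac{x_1-x_3}{x_4}$ once more if needed), one may take $f=\tfrac{a_1-a_2}{a_1}$ with $a_1,a_2\in Z$, and then the expansion $(a_1-a_2)(b_1-b_2)z=a_1(b_1-b_2)z-a_2(b_1-b_2)z$ gives the three-term bound in the stated form.
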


\begin{proof}
Note that $R(aZ+b)=R(Z)$ so without loss of generality we may assume $a=1$ and $b=0$. 

\paragraph{Case 1}
Since $R(Z) \cdot R(Z) \neq R(Z)$ there are $x_1,x_2,x_3,x_4,y_1,y_2,y_3,y_4 \in Z$ with
$$\frac{x_1-x_2}{x_3-x_4}\frac{y_1-y_2}{y_3-y_4} \notin R(Z) $$
This can be written as 
$$\frac{x_1-x_2}{x_1}\frac{x_1}{x_1-x_3}\frac{x_1-x_3}{x_4}\frac{x_4}{x_3-x_4}\frac{y_1-y_2}{y_3-y_4} \notin R(Z)$$
and so there are $a_1,a_2,b_1,b_2,b_3,b_4 \in Z$ with $\frac{a_1-a_2}{a_1}\frac{b_1-b_2}{b_3-b_4} \notin R(Z)$. We therefore have that for any $Z' \subseteq Z$

\begin{align*}
\left|Z'\right|^2&\approx \left|Z'+\frac{a_1-a_2}{a_1}\frac{b_1-b_2}{b_3-b_4}Z'\right|\leq \left|a_1(b_1-b_2)Z' -a_2(b_1-b_2)Z' +a_1(b_3-b_4)Z'\right|
\end{align*}
This completes the proof of Case 1.

\paragraph{Case 2}
We seek $z_1,z_2,z_3,z_4 \in Z$ such that $\frac{z_1-z_2}{z_3-z_4}+1 \notin R(Z)$. We will then be done, as for any $Z' \subseteq Z$ we will have
\begin{align*}
\left|Z'\right|^2&\approx\left|Z'+\left(\frac{x_1-x_2}{x_3-x_4}+1\right)Z'\right|
\leq\left|(x_1-x_2)Z' + (x_3-x_4)Z' + (x_3-x_4)Z' \right|
\end{align*}
Since $R(Z)+R(Z) \neq R(Z)$ there are $x_1,x_2,x_3,x_4,y_1,y_2,y_3,y_4 \in Z$ with 
$$\frac{x_1-x_2}{x_3-x_4}+\frac{y_1-y_2}{y_3-y_4} \notin R(Z) $$
On the other hand, since $R(Z) \cdot R(Z)=R(Z)$ there are $z_1,z_2,z_3,z_4 \in Z$ with 
$$\frac{x_1-x_2}{x_3-x_4}\frac{y_3-y_4}{y_1-y_2}=\frac{z_1-z_2}{z_3-z_4}$$
Combining these two facts gives:
\begin{align*}
\frac{z_1-z_2}{z_3-z_4}+1&=\frac{x_1-x_2}{x_3-x_4}\frac{y_3-y_4}{y_1-y_2}+1
=\frac{y_3-y_4}{y_1-y_2}\left(\frac{x_1-x_2}{x_3-x_4}+\frac{y_1-y_2}{y_3-y_4}\right)
\notin R(Z)
\end{align*}

This completes the proof of Case 2 and therefore of the lemma.
\end{proof}

We will also use the following lemma, due to Katz and Shen. A proof can be found in \cite{KS}. 

\begin{lemma}\label{theorem:ratiofield}
If $R(Z) \subseteq G$ for some subfield $G$ of $F$, then $Z \subseteq aG+b$ for some $a,b \in F$
\end{lemma}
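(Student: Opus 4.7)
The plan is to exploit the fact that the ratio set $R(Z)$ already encodes an affine normalisation of $Z$, so that picking a basepoint inside $Z$ immediately gives the required affine embedding into $G$.

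Specifically, I would first dispose of the trivial cases $|Z|\leq 1$, where the conclusion holds with $a=1$ and any choice of $b\in F$. So assume $|Z|\geq 2$ and fix two distinct elements $z_0,z_1\in Z$. For every $z\in Z$, consider the element
\[
\frac{z-z_0}{z_1-z_0}.
\]
This is of the form $(x-y)/(u-v)$ with $x,y,u,v\in Z$ and $u\neq v$, so it lies in $R(Z)$, and by hypothesis it therefore lies in $G$. Rearranging, $z=z_0+(z_1-z_0)g$ for some $g\in G$, which means $z\in aG+b$ with $a:=z_1-z_0$ and $b:=z_0$, both elements of $F$. This holds for every $z\in Z$, so $Z\subseteq aG+b$ as required.

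There is no real obstacle: the content of the lemma is just the observation that ``ratios of differences lie in $G$'' is already an affine-invariant statement, so once a basepoint is chosen in $Z$ the affine translate of $G$ containing $Z$ is forced. The only subtlety worth noting is that $a$ and $b$ are allowed to be arbitrary elements of $F$ (not of $G$), which is exactly what makes this direct choice work; no appeal to closure of $G$ under multiplication by $a$ is needed, since $aG$ is simply the set $\{ag:g\in G\}\subseteq F$.
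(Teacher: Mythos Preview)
Your proof is correct: once two distinct basepoints $z_0,z_1\in Z$ are fixed, every $(z-z_0)/(z_1-z_0)$ lies in $R(Z)\subseteq G$, and the affine embedding $Z\subseteq (z_1-z_0)G+z_0$ follows immediately. The paper does not give its own proof of this lemma but merely cites Katz--Shen; your argument is the standard direct one and there is nothing to add.
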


\subsection{A lemma about sumsets}

The following lemma was used in the Helfgott-Rudnev paper \cite{HR}, and is originally due to Bourgain \cite{bourgain}:

\begin{lemma}\label{theorem:intersection}
Let $F$ be a field. Let $X$ and $Y$ be finite subsets of $F$ and let $K=\max_{y \in Y} \left|X+yX \right|$
Then there exist elements $x_1, x_2, x_3 \in X$ such that $\left|\left(X-x_1\right) \cap \left(x_2-x_3\right)Y \right|\gg\frac{\left|Y \right|\left|X \right|}{K}$.
\end{lemma}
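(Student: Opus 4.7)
The plan is a standard energy/pigeonhole argument. Introduce the counting quantity
\begin{equation*}
N := \#\bigl\{(x,y,x_1,x_2,x_3) \in X \times Y \times X^3 : x - x_1 = (x_2 - x_3) y\bigr\}.
\end{equation*}
Whenever $x_2 \neq x_3$ the map $y \mapsto (x_2-x_3) y$ is injective on $Y$ and the value $x = x_1 + (x_2-x_3)y$ is then determined, so for such a triple the number of pairs $(x,y)$ counted above is exactly $|(X-x_1) \cap (x_2-x_3)Y|$. Hence the lemma will follow by lower-bounding $N$ and then pigeonholing over the $\le |X|^3$ choices of $(x_1,x_2,x_3)$, taking some care to excise the degenerate locus $x_2 = x_3$.

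The lower bound on $N$ comes from Cauchy--Schwarz applied to sumsets. Rewrite the constraint as $x + x_3 y = x_1 + x_2 y$, fix $y \in Y$, and consider the representation function $f_y(s) = \#\{(u,v) \in X \times X : u + yv = s\}$ of the sumset $X + yX$. Then $\sum_s f_y(s) = |X|^2$ and $f_y$ is supported on a set of size at most $|X+yX| \le K$, so Cauchy--Schwarz yields $\sum_s f_y(s)^2 \ge |X|^4/K$. But $\sum_s f_y(s)^2$ is precisely the number of quadruples $(x, x_3, x_1, x_2) \in X^4$ satisfying the equation for this particular $y$, so summing over $y \in Y$ gives $N \ge |X|^4 |Y|/K$.

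To conclude, observe that the degenerate locus $x_2 = x_3$ forces $x = x_1$, leaving only $x_3 \in X$ and $y \in Y$ free, so it contributes exactly $|X|^2 |Y|$ to $N$. Assuming $K \le |X|^2/2$, the nondegenerate tuples contribute at least $N/2 \ge |X|^4 |Y|/(2K)$, and pigeonholing over the at most $|X|^3$ triples with $x_2 \neq x_3$ produces one for which the pair count, hence the intersection cardinality, is $\gg |X||Y|/K$, as required. The complementary regime $K > |X|^2/2$ makes the desired bound $|X||Y|/K$ smaller than $2|Y|/|X|$, and can be absorbed into the implicit constant or handled as a trivial case.

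The main obstacle is the bookkeeping around the degenerate diagonal $x_2 = x_3$, where the passage from pair counts to intersection cardinalities breaks down; fortunately its contribution to $N$ is the manageable quantity $|X|^2|Y|$, so it only needs to be shown not to swamp the main term $|X|^4|Y|/K$. Apart from this, the argument is a direct Cauchy--Schwarz/pigeonhole combination of the kind used repeatedly in \cite{HR} and \cite{KS}.
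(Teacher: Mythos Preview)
Your argument is correct and is essentially the paper's own proof: both compute the additive energy $E=\sum_{y\in Y}\sum_s f_y(s)^2$ via Cauchy--Schwarz to get $E\ge |X|^4|Y|/K$, and then pigeonhole over three elements of $X$ (the paper does this in two stages, fixing $z_1,z_2$ and then $x_*$, whereas you fix $(x_1,x_2,x_3)$ at once). You are in fact more careful than the paper in flagging and bounding the degenerate diagonal $x_2=x_3$; the paper's write-up silently ignores the analogous case $u_*=0$.
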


\begin{proof}
Let $E$ be the number of solutions to the equation $x_1+y x_2 = x_3+yx_4 $ with $x_1,x_2,x_3,x_4 \in X$ and $y \in Y$. Then

\begin{align*}
E=\sum_{y \in Y}\sum_{k \in X+yX}\left|X \cap \left( \frac{X-k}{y}\right) \right|^2
\geq \sum_{y \in Y}\frac{\left(\sum_{k \in X+yX}\left|X \cap \left(\frac{X-k}{y}\right) \right|\right)^2}{\left|X+yX\right|}
\geq \frac{\left|X\right|^4\left|Y\right|}{K}
\end{align*}

So there exist $z_1, z_2 \in X$ such that the equation $x_1 +yz_1=z_2+yx_2$  has $\gg\frac{\left|X \right|^2 \left|Y\right|}{K}$ solutions  $\left(x_1,x_2,y\right) \in X \times X \times Y$. In other words, if $X_1=X-z_1$ and $X_2=X-z_2$ then there are $\gg\frac{\left|X \right|^2 \left|Y\right|}{K}$ solutions $(u,v,y)\in X_1 \times X_2 \times Y$ to the equation $v=yu$. By averaging, there is an element $u_* =x_*-z_1\in X_1$ with $x_* \in X$ such that $v=y u_*$ has $\gg \frac{\left|Y\right|\left|X\right|}{K}$ solutions. Thus:
$$\left|\left(X-z_2\right)\cap\left(x_*-z_1\right)Y\right|=\left|X_2 \cap u_* Y\right|\gg \frac{\left|Y\right|\left|X\right|}{K} $$

\end{proof}

\subsection{Standard results from additive combinatorics}
We record some standard results from additive combinatorics. The first, below, formalises a common technique.

\begin{lemma}[Popularity pigeonholing]\label{theorem:popularity}
Let $X$ be a finite set and let $f:X \to \left[1,N \right]$ be a function. Then there is a subset $Y \subseteq X$ with $\left|Y \right|\gg \frac{\sum_{x \in X} f(x)}{N} $ such that for any $y \in Y$ we have $f(y) \gg \frac{\sum_{x \in X}f(x)}{\left|X \right|}$
\end{lemma}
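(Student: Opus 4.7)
The plan is a straightforward averaging/threshold argument. Let $S = \sum_{x \in X} f(x)$, so that the average value of $f$ on $X$ is $S/|X|$. First I would define the candidate set to be everything above a constant fraction of this average, say
\[
Y = \left\{ x \in X : f(x) \geq \tfrac{1}{2}\cdot\tfrac{S}{|X|} \right\}.
\]
By construction the second conclusion of the lemma, $f(y) \gg S/|X|$ for $y \in Y$, is immediate.

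For the size bound, the key step is to show that $Y$ captures a positive proportion of the total mass $S$. This follows from the trivial estimate on the complement:
\[
\sum_{x \in X \setminus Y} f(x) < |X| \cdot \tfrac{1}{2} \cdot \tfrac{S}{|X|} = \tfrac{S}{2},
\]
hence $\sum_{y \in Y} f(y) > S/2$. Now I would use the upper bound $f(y) \leq N$ to convert this mass bound into a cardinality bound: since each term is at most $N$, we must have $|Y| \cdot N > S/2$, i.e.\ $|Y| \gg S/N$, which is exactly the first conclusion.

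There is no real obstacle here; the only thing to be careful about is that the two conclusions involve two different denominators ($N$ for the cardinality, $|X|$ for the pointwise lower bound), so the threshold used to define $Y$ must be chosen relative to $|X|$ (the average) while the size bound is extracted using $N$ (the pointwise maximum). A single threshold set therefore serves both purposes, and no dyadic decomposition is required because the lemma only asks for $f(y) \gg S/|X|$ rather than $f(y) \approx v$ for a common value $v$.
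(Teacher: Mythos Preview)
Your argument is correct and is essentially identical to the paper's proof: the paper also sets the threshold at $\alpha = \tfrac{1}{2}\cdot S/|X|$, bounds the complement's contribution by $\alpha|X|=S/2$, and uses $f\leq N$ on $Y$ to deduce $|Y|\geq S/(2N)$.
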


\begin{proof}
Let $Y=\left\{x \in X: f(x) \geq \alpha \right\}$ where $\alpha=\frac{\sum_{x \in X}f(x)}{2\left|X \right|}$. We seek to show that $\left|Y \right|\gg \frac{\sum_{x \in X}f(x)}{N}$. We see this as follows:
\begin{align*}
\sum_{x \in X}f(x)= \sum_{x:f(x) \geq \alpha}f(x) + \sum_{x: f(x) < \alpha}f(x) \leq N\left|Y \right|+ \alpha \left|X \right|
\end{align*}
So we have
\begin{align*}
\left|Y \right|&\geq \frac{\sum_{x \in X}f(x) - \alpha \left|X \right|}{N}=\frac{\sum_{x \in X}f(x)}{2N}\gg \frac{\sum_{x \in X}f(x)}{N}
\end{align*}

\end{proof}
We will use the following form of the Pl\"unnecke-Ruzsa inequality, due to Ruzsa \cite{ruzsa}:

\begin{lemma}[Pl\"unnecke-Ruzsa inequality]\label{theorem:plunnecke}
Let $X,B_1, \ldots, B_k \subseteq \mathbb{F}_p$. Then $\left|\sum_{j=1}^k B_j \right|\ll\frac{\prod_{j=1}^k \left|X + B_j\right|}{\left|X \right|^{k-1}}$
\end{lemma}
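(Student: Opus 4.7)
The stated inequality is the classical Pl\"unnecke-Ruzsa sumset inequality, and I would prove it via the slick approach of Petridis (2011), which replaces Ruzsa's original graph-theoretic argument with a short combinatorial induction.

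The technical heart of the argument is Petridis's key lemma. For finite sets $X$ and $B$ in an abelian group, let $X^* \subseteq X$ be a nonempty subset that minimizes the ratio $|Y+B|/|Y|$ over all nonempty $Y \subseteq X$, and set $K = |X^*+B|/|X^*|$. Note $K \le |X+B|/|X|$ automatically, since $X$ itself is in the class over which the minimum is taken. The lemma states that for every finite set $C$,
\[
|X^* + B + C| \le K \, |X^* + C|.
\]
I would prove this by induction on $|C|$. The base case $|C|=1$ is immediate from $|X^*+B| = K|X^*|$. For the inductive step with $C = C_0 \cup \{c\}$ where $c \notin C_0$, define $Z = \{x \in X^* : x+B+c \subseteq X^*+B+C_0\}$. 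The new elements of $X^*+B+C$ beyond those in $X^*+B+C_0$ lie in $(X^* \setminus Z)+B$; the minimality of $K$ gives $|Z+B| \ge K|Z|$, which combined with $|X^*+B|=K|X^*|$ and the inductive hypothesis for $C_0$ closes the induction.

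Given Petridis's lemma, I would derive the main inequality by induction on $k$. The case $k=1$ is immediate since $|B_1| \le |X+B_1|$. For the inductive step, apply Petridis's lemma to $(X,B_1)$ to obtain $X^{(1)} \subseteq X$ nonempty such that $|X^{(1)}+B_1+C| \le K_1 |X^{(1)}+C|$ for every $C$, where $K_1 = |X+B_1|/|X|$ serves as an upper bound on the minimum ratio. Taking $C = B_2+\cdots+B_k$ and using $|B_1+\cdots+B_k| \le |X^{(1)}+B_1+\cdots+B_k|$ yields
\[
|B_1+\cdots+B_k| \le K_1 \, |X^{(1)}+B_2+\cdots+B_k|.
\]
One then iterates, applying Petridis's lemma to the pair $(X^{(1)},B_2)$ and peeling off the next factor, and so on down a chain $X \supseteq X^{(1)} \supseteq X^{(2)} \supseteq \cdots$, until the final bound takes the form $K_1 K_2 \cdots K_k |X|$, equivalent to $\prod_{j=1}^k |X+B_j|/|X|^{k-1}$.

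The principal obstacle is the bookkeeping in this iteration: when one restricts from $X^{(i-1)}$ to a Petridis subset $X^{(i)}$, the relevant doubling ratio $|X^{(i)}+B_{i+1}|/|X^{(i)}|$ for the next set can a priori exceed the original $K_{i+1} = |X+B_{i+1}|/|X|$, so one cannot naively claim that each stage contributes a factor of exactly $K_{i+1}$. Resolving this is the core technical content of Petridis's paper; the resolution exploits, at each step, the minimality inequality $|X^{(i)}+B_i| \le |X^{(i-1)}+B_i|$ together with $|X^{(i)}| \le |X^{(i-1)}|$ to telescope the product of intermediate ratios back to the original $K_i$'s. Since the present paper cites this inequality as a standard tool due to Ruzsa, the full bookkeeping can be deferred to the additive combinatorics literature.
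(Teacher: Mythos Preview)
The paper does not actually prove this lemma: it is stated as a standard tool and attributed to Ruzsa with a citation, nothing more. So there is no proof in the paper to compare against; you have supplied strictly more than the paper does.

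Your route via Petridis's lemma is a legitimate, more elementary alternative to Ruzsa's original graph-theoretic argument. The statement and proof sketch of Petridis's key lemma are correct. Where your write-up is weakest is exactly where you flag it: the iteration for distinct summands. The chain $X \supseteq X^{(1)} \supseteq X^{(2)} \supseteq \cdots$ you describe runs into the genuine problem that after passing to $X^{(1)}$, the Petridis bound for $B_2$ is controlled by $|X^{(1)}+B_2|/|X^{(1)}|$, which need not be bounded by $|X+B_2|/|X|$; and the ``telescoping'' you invoke via $|X^{(i)}+B_i| \le |X^{(i-1)}+B_i|$ and $|X^{(i)}| \le |X^{(i-1)}|$ does not by itself recover the product $\prod_j |X+B_j|/|X|^{k-1}$ --- the inequalities point the wrong way when you multiply them out. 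The actual resolution in Petridis's paper (and independently in Gyarmati--Matolcsi--Ruzsa for different summands) is a bit more delicate than a straight telescoping. Since the paper itself is content to cite the literature, your deferral is perfectly appropriate; but if you want the sketch to stand on its own, either restrict to the clean single-summand case $|X'+kB| \le K^k|X'|$ (where the iteration is immediate with a fixed Petridis subset) and then quote the different-summands extension, or reproduce the correct multi-summand induction from Petridis rather than the chain you outline.
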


The following lemma is a version of the Balog-Szemer\'edi-Gowers theorem. A proof can be found in \cite{TV}, but this appears to have a typographical error which leads to an exponent of $-4$, rather than the correct exponent of -5 below. See \cite{FS} for a proof yielding the exponent of $-5$.

\begin{lemma}[Balog-Szemeredi-Gowers]\label{theorem:BSG}
Let $X,Y$ be additive sets with $\left|X\right|=\left|Y\right|=n$. Suppose that there is a subset $G \subseteq X \times Y$ such that $\left|X +^{G} Y\right|<n$ and that $\left|G\right|=\alpha n^2$ for some $\alpha \in (0,1)$. Then there exist subset $X' \subseteq X$ and $Y' \subseteq Y$ with $\left|X'\right|,\left|Y'\right|\gg \alpha n$ such that 
$\left|X'+Y'\right| \ll \alpha^{-5}n $
\end{lemma}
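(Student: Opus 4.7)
The plan is to use Gowers' graph-theoretic proof of Balog--Szemer\'edi--Gowers, viewing $G$ as a bipartite graph between $X$ and $Y$ whose edges correspond to sums lying in the restricted sumset $X +^{G} Y$.

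The first step is to clean up the graph by popularity pigeonholing. Applying Lemma \ref{theorem:popularity} to the degree function of $G$ on each side, I would pass to subsets $X_0 \subseteq X$ and $Y_0 \subseteq Y$ each of size $\gg \alpha n$ on which every vertex has degree $\gg \alpha n$ in the restricted graph $G_0 := G \cap (X_0 \times Y_0)$, losing only a constant factor in the edge count so that $|G_0| \gg \alpha n^2$.

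Next I would define an auxiliary graph $H$ on $X_0$ by joining $x_1, x_2$ whenever they share $\gg \alpha^2 n$ common $Y_0$-neighbours via $G_0$. Counting paths of length two in $G_0$ and applying Cauchy--Schwarz shows that $H$ has edge density $\gg 1$ inside $X_0 \times X_0$. A second popularity pigeonhole then produces $X' \subseteq X_0$ of size $\gg \alpha n$ such that every $x \in X'$ is $H$-adjacent to $\gg |X_0|$ vertices, and in particular to $\gg |X'|$ vertices of $X'$ itself. Setting $Y' = Y_0$, the standard Gowers ``three-path'' decomposition
\begin{equation*}
x' + y' \;=\; (x' + y_1) \;-\; (x_3 + y_1) \;+\; (x_3 + y'),
\end{equation*}
valid whenever $x_3 \in X_0$ is an $H$-neighbour of $x'$ witnessed by a common $G_0$-neighbour $y_1$ and also satisfies $(x_3, y') \in G_0$, realises $x' + y'$ inside $(X+^{G}Y) - (X+^{G}Y) + (X+^{G}Y)$, and moreover furnishes each element of $X' + Y'$ with many such representations.

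The final step converts this into the stated bound. Dividing the number of valid triples $(x_3, y_1, y')$ by the per-element multiplicity and applying the Pl\"unnecke--Ruzsa inequality (Lemma \ref{theorem:plunnecke}) to control $|(X+^{G}Y) - (X+^{G}Y) + (X+^{G}Y)|$ in terms of $|X +^{G} Y| < n$, one deduces $|X' + Y'| \ll \alpha^{-5} n$. The main technical obstacle, and precisely the place where the argument presented in \cite{TV} yields the suboptimal $\alpha^{-4}$ rather than the correct $\alpha^{-5}$, is the careful bookkeeping of the $\alpha$-losses incurred at each stage: the two degree-popularity passes, the pair-counting defining $H$, the second popularity step producing $X'$, and the Pl\"unnecke application. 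The pitfall is that an over-eager Cauchy--Schwarz at the three-path counting step appears to save a factor of $\alpha$ that cannot honestly be extracted; following \cite{FS} one must resist this saving and carry the extra $\alpha^{-1}$ through to the end.
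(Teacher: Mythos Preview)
The paper does not supply its own proof of this lemma: it is quoted as a known result, with the remark that a proof appears in \cite{TV} (with a typographical slip giving exponent $-4$) and that \cite{FS} carries out the argument correctly to yield $\alpha^{-5}$. So there is nothing in the paper to compare your proposal against beyond that one-sentence pointer to the literature.

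That said, your sketch is the standard Gowers graph-theoretic route and is in line with what the cited references do. Two small points. First, your word ``suboptimal'' for the exponent $-4$ is backwards: since $\alpha<1$, the bound $\alpha^{-4}n$ is \emph{stronger} than $\alpha^{-5}n$, so the issue in \cite{TV} is an overclaim, not a weak bound --- your last sentence gets this right even if the adjective does not. Second, the final step as you describe it is not quite how the argument closes: one does not bound $|S-S+S|$ via Pl\"unnecke--Ruzsa (which needs an auxiliary set with small doubling against each summand, unavailable here). Instead, the path-count itself shows that every element of $X'+Y'$ has $\gg \alpha^{5} n^{2}$ representations as $s_1-s_2+s_3$ with $s_i\in S=X+^{G}Y$, and since the total number of such triples is $|S|^{3}<n^{3}$ one divides to get $|X'+Y'|\ll \alpha^{-5}n$ directly. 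If you rewrite that last paragraph accordingly, your outline matches the argument in \cite{FS}.
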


A proof of the following `covering' result can be found in \cite{shen}.

\begin{lemma}[Covering lemma]
\label{theorem:covering}
Let G be a group and $B,C \subseteq G$ be finite. Let $\epsilon \in (0,1)$. Then the number of translates of $C$ required to cover $(1- \epsilon)\left|B \right|$ elements of $B$ is $O_{\epsilon}\left(\frac{\left|B+C\right|}{\left|C\right|}\right)$.
\end{lemma}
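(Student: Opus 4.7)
My plan is to prove this by a standard greedy iteration with exponential decay, akin to Ruzsa's covering lemma but accepting a small uncovered residue in exchange for covering $B$ by translates of $C$ itself rather than of $C - C$. Set $K := |B+C|/|C|$, so that the target bound is $O_{\epsilon}(K)$.

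The first step would be a single-translate bound: for any nonempty $B' \subseteq B$, some translate of $C$ (in the form $x - C$ with $x \in B' + C$) contains at least $|B'|/K$ points of $B'$. This follows by double-counting the pairs $(b,c) \in B' \times C$ indexed by their sum. Each such pair is counted exactly once in $\sum_{x \in B'+C} |B' \cap (x-C)| = |B'| \cdot |C|$, and since the support $B' + C$ has cardinality at most $|B + C|$, averaging yields some $x$ with $|B' \cap (x-C)| \geq |B'| \cdot |C| / |B+C| = |B'|/K$.

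I would then iterate greedily. Let $B_0 := B$, and at step $i$ pick $x_i$ achieving the one-step bound with $B' := B_i$, then set $B_{i+1} := B_i \setminus (x_i - C)$. This gives $|B_{i+1}| \leq (1 - 1/K)|B_i|$, hence $|B_t| \leq |B|(1 - 1/K)^t \leq |B| \exp(-t/K)$. Taking $t := \lceil K \ln(1/\epsilon) \rceil = O_{\epsilon}(K)$ yields $|B_t| \leq \epsilon |B|$, so the $t$ sets $\{x_i - C\}_{i=0}^{t-1}$ together cover at least $(1 - \epsilon)|B|$ elements of $B$, as required. There is no substantive obstacle: this is a textbook greedy-plus-pigeonhole argument. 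The only cosmetic subtlety is that the covering sets arise as $x - C$ rather than $g + C$; in the abelian setting these are translates of $-C$, which has the same cardinality as $C$, and the bound is then naturally expressed in the form $|B+C|/|C|$ as stated.
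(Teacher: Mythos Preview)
The paper does not supply its own proof of this lemma; it simply cites \cite{shen}. Your greedy-iteration argument is the standard proof of this statement and is correct: the double-counting identity $\sum_{x\in B'+C}|B'\cap(x-C)|=|B'|\,|C|$ together with $|B'+C|\le|B+C|$ gives a translate meeting $B'$ in at least $|B'|/K$ points, and iterating yields the exponential decay $|B_t|\le|B|(1-1/K)^t$, so $t=\lceil K\ln(1/\epsilon)\rceil=O_\epsilon(K)$ translates suffice.

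Your remark about $x-C$ versus $g+C$ is accurate: the argument as written produces translates of $-C$, and the bound $|B+C|/|C|$ is the natural one for that. To obtain translates of $C$ literally, one would run the same argument with the identity $\sum_{x\in B'-C}|B'\cap(x+C)|=|B'|\,|C|$, yielding the bound $|B-C|/|C|$ instead. This discrepancy is indeed cosmetic for the paper's purposes, since in every application (Claim~\ref{theorem:coveringclaim} and Section~\ref{section:mainproof}) the covering translates are immediately fed into symmetric sumset estimates where the sign of $C$ is irrelevant.
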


\section{Proof of Proposition \ref{theorem:above}}
\label{section:above}

Recall the statement of Proposition \ref{theorem:above}:

\begin{tabular}{|p{15cm}|}
\hline
\paragraph{Proposition \ref{theorem:above}}

There is an absolute constant $\gamma_1$ such that if: 
\begin{itemize}
\item $F$ is a field and $A$, $B$ are finite subsets of $F$ with $0 \notin B$.
\item $L_A$ is the set of lines through the origin with gradients lying in $A$.
\item $L_B$ is the set of horizontal lines crossing the $y$-axis at some $b \in B$.
\item $P$ is a set of points, each lying on the intersection of some line in $L_A$ with some line in $L_B$.
\item $T:=I_3\left(P,L(P)\right)$.
\item $P$ is, additionally, a $\left(2,\frac{\gamma_1 T^{65}}{|A|^{130}|B|^{194}}\right)$-antifield.
\end{itemize}

Then:

\begin{equation*}
T\ll \max{\left\{\left|A\right|^{\frac{643}{321}}\left|B\right|^{\frac{961}{321}},\left|A\right|^{\frac{535}{267}}\left|B\right|^{\frac{799}{267}},\left|A\right|^{\frac{499}{249}}\left|B\right|^{\frac{743}{249}} \right\}} 
\end{equation*}
\\
\space
\\
\hline
\end{tabular}

This section uses the results of Section \ref{section:lemmata} to prove Proposition \ref{theorem:above}.

\subsection{Structure of the proof}

We shall assume that $P$ is a $(2,\lambda)$ antifield for some $\lambda$, and then show that the conclusion of the Proposition follows when $\lambda \approx \frac{T^{65}}{|A|^{130}|B|^{194}}$.

The proof of Proposition \ref{theorem:above} uses the following three claims, whose proofs are deferred. Instead, we shall first see how they are applied to prove the proposition. The proofs of the claims then follow.

\begin{claim}\label{theorem:bsgclaim}

There is a subset $C \subseteq \mathbb{F}_q$ with $|C| \gg \frac{T^5}{|A|^{10}|B|^{14}}$ such that for each $c \in C$ there is a pair of $(1,\lambda)$-antifields $A_c^1,A_c^2 \subseteq F$ with
\begin{equation}\label{size}
|A_c^1|,|A_c^2| \gg \frac{T}{|A||B|^3}
\end{equation}
\begin{equation}\label{sumset}
\left|A_c^1+cA_c^2\right|\ll \frac{\left|A\right|^{11}\left|B\right|^{15}}{T^5}
\end{equation}
Moreover, there exists a particular element $c_* \in C$ such that, writing $A_*=A_{c_*}$, we have
\begin{equation}\label{intersection}
\left|A_c^1 \cap A_*^1\right|, \left|A_c^2 \cap A_*^2\right|\gg \frac{T^4}{|A|^7|B|^{12}}
\end{equation}
for all $c \in C$.
\end{claim}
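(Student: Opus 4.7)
The approach will be a Balog--Szemer\'edi--Gowers argument, as the tag \emph{bsgclaim} already suggests. The plan is to convert the $T$ collinear triples into additive quadruples on a slope-indexed family of graphs, apply BSG (Lemma~\ref{theorem:BSG}) uniformly in a slope parameter $c$, and extract the distinguished $c_*$ by a second-moment calculation.

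To set up, I would use the fact that each $p \in P$ sits on a unique line of $L_A$ of slope $a(p) \in A$ and a unique horizontal line of height $b(p) \in B$, so $p = (b(p)/a(p), b(p))$; incidence of $p$ with a line $y = sx + t$ is equivalent to the identity $(a(p) - s)(b(p) - t) = st$. Dyadic pigeonholing on the richness of lines (the number of $P$-points on each) and on the slope parameter concentrates a $\log^{-O(1)}$-fraction of the $T$ collinear triples onto a scale where each involved line contains $\approx k$ points and has slope in a single dyadic range around some value $c$. A Cauchy--Schwarz step then recasts the triples at this scale as additive quadruples in the $x$-projection $A(P) = \{x : (x, y) \in P\}$ under a sum operation twisted by $c$; this $c$ is the parameter that the claim will index by.

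For each such $c$, a direct application of Lemma~\ref{theorem:BSG} (with $\alpha$ tracking the energy density surviving the pigeonholing) produces sets $A_c^1, A_c^2 \subseteq A(P)$ with the stated bounds $|A_c^j| \gg T/(|A||B|^3)$ and $|A_c^1 + c A_c^2| \ll |A|^{11}|B|^{15}/T^5$. The asymmetry between these two bounds is driven by the exponent $-5$ in the conclusion of BSG, while the set $C$ of effective $c$'s inherits cardinality $\gg T^5/(|A|^{10}|B|^{14})$ directly from the triple-count carried through the pigeonhole. The antifield property of each $A_c^j$ is then free: subsets of a $(1,\lambda)$-antifield are themselves $(1,\lambda)$-antifields, and $A(P)$ is one by the hypothesis on $P$.

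Finally, to obtain $c_*$, I would compute
\begin{equation*}
\sum_{c, c_* \in C} |A_c^j \cap A_{c_*}^j| \;=\; \sum_{x \in A(P)} \bigl|\{c \in C : x \in A_c^j\}\bigr|^2 \;\geq\; \frac{\bigl(\sum_{c \in C} |A_c^j|\bigr)^2}{|A(P)|}
\end{equation*}
by Cauchy--Schwarz, substitute $|A(P)| \leq |P| \leq |A||B|$ together with the lower bound on $|A_c^j|$, and pigeonhole to find a $c_*$ for which $\sum_{c \in C} |A_c^j \cap A_{c_*}^j| \gg |C|\cdot T^4/(|A|^7|B|^{12})$. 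Lemma~\ref{theorem:popularity}, applied to the function $c \mapsto |A_c^j \cap A_{c_*}^j|$, then refines $C$ to a subset of comparable size on which the intersection bound holds uniformly for every $c$; renaming this refined set $C$ preserves all previous bounds. I expect the main obstacle to be the arithmetic bookkeeping in the BSG step: pinning down exactly what $c$ parametrizes and what $\alpha$ to apply BSG with, so that the $\alpha^{-5}$, combined with the dyadic losses and the obvious cardinality bounds, reproduces precisely the exponents in the claim.
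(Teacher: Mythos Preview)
Your outline has the right overall shape (pigeonhole, BSG, then Cauchy--Schwarz to extract $c_*$), but it diverges from the paper's actual argument in a way that matters for the exponents, and it has two concrete gaps.

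In the paper the parameter $c$ is \emph{not} a slope. One first pigeonholes over pairs of heights to fix two horizontal lines $h_{b_1},h_{b_2}$ carrying $\gg T/|B|^2$ triples with their first two points on those lines; then for each third height $b$ the collinearity condition reads $x_1(1-\mu_b)+x_2\mu_b\in X_b$ with $\mu_b=(b-b_1)/(b_2-b_1)$ and $X_b=\{x:(x,b)\in P\}$. This is already a partial sumset statement (no preliminary Cauchy--Schwarz into additive quadruples), and BSG is applied directly with $n=|A|$ and $\alpha=T/(|A|^2|B|^3)$, giving $A_b^1\subseteq X_{b_1}$ and $A_b^2\subseteq X_{b_2}$. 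The index set $C'$ is the image of the set of good third heights under the M\"obius map $b\mapsto (b_1-b_2)/(b_2-b)-1$, with $|C'|\gg T/(|A|^2|B|^2)$; the bound $T^5/(|A|^{10}|B|^{14})$ only appears after the refinement for $c_*$, not ``directly from the pigeonhole''. Your proposed ``dyadic range of slopes'' has no meaning over $\mathbb{F}_q$, and more importantly, the structural point you are missing is that all the $A_c^1$ live inside the \emph{fixed} slice $X_{b_1}$ of size at most $|A|$ (and likewise $A_c^2\subseteq X_{b_2}$), not merely inside $A(P)$ of size up to $|A||B|$. That common container of size $|A|$ is what drives the stated exponents.

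For the extraction of $c_*$ you run Cauchy--Schwarz separately for $j=1$ and $j=2$, which in general yields two different distinguished elements and two different refined sets $C$. The paper avoids this by working with the products $P_c=A_c^1\times A_c^2\subseteq X_{b_1}\times X_{b_2}$ (ambient size $\leq |A|^2$), applying Cauchy--Schwarz once to $\sum_c|P_c|$, and then popularity pigeonholing to find a single $c_*$ and a refined $C$ with $|P_c\cap P_{c_*}|\gg T^4/(|A|^6|B|^{12})$ for every $c\in C$. Since $|P_c\cap P_{c_*}|=|A_c^1\cap A_*^1|\cdot|A_c^2\cap A_*^2|$ and each factor is at most $|A|$, both intersection bounds in \eqref{intersection} follow simultaneously. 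Without the product trick and the $|A|$-sized common containers, your bookkeeping will not reproduce the claimed exponents.
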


\begin{claim}\label{theorem:sumsetclaim}
The following bounds hold for each $c \in C$
\begin{equation}\label{claim1}
\left|A_c^1 + A_c^1\right|,\left|A_c^2 + A_c^2\right|  \ll \frac{\left|A\right|^{23}\left|B\right|^{33}}{T^{11}}
\end{equation}
\begin{equation}\label{claim2}
\left|c_*A_c^2 + cA_c^2\right| \ll \frac{\left|A\right|^{59}\left|B\right|^{87}}{T^{29}}
\end{equation}
\begin{equation}\label{claim3}
\left|c_*A_*^2 + cA_c^2\right| \ll \frac{\left|A\right|^{83}\left|B\right|^{132}}{T^{44}}
\end{equation}
\begin{equation}\label{claim4}
\left|c_*A_*^2 + cA_*^2\right| \ll \frac{\left|A\right|^{119}\left|B\right|^{177}}{T^{59}}
\end{equation}
\end{claim}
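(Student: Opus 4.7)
}

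Let $K := |A|^{11}|B|^{15}/T^5$ denote the sumset bound from \eqref{sumset}, let $K_1$ denote the bound appearing in \eqref{claim1}, and write $Y := A_c^1 \cap A_*^1$ and $Z := A_c^2 \cap A_*^2$ for the two intersections from \eqref{intersection}. My plan is to obtain all four bounds by iterated applications of the Pl\"unnecke-Ruzsa inequality (Lemma \ref{theorem:plunnecke}), exploiting the fact that since $Y$ lies in both $A_c^1$ and $A_*^1$, the sumsets $|Y + cA_c^2|$, $|Y + c_* A_*^2|$, $|Y + cZ|$ and $|Y + c_* Z|$ are all bounded by $K$.

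For \eqref{claim1}, I would apply Lemma \ref{theorem:plunnecke} with pivot $X = A_c^1$ and the two factors $B_1 = B_2 = cA_c^2$ to obtain $|A_c^2 + A_c^2| = |cA_c^2 + cA_c^2| \ll K^2/|A_c^1|$; the bound on $|A_c^1 + A_c^1|$ is symmetric, with pivot $X = cA_c^2$. Plugging in the size bound \eqref{size} then recovers \eqref{claim1}. For \eqref{claim3}, I would apply Lemma \ref{theorem:plunnecke} directly with pivot $X = Y$ and the two sets $c_* A_*^2$ and $cA_c^2$ to get $|c_* A_*^2 + cA_c^2| \ll K^2/|Y|$, which turns out to be comfortably tighter than the stated bound.

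The genuinely new work is for \eqref{claim2} and \eqref{claim4}: these contain a ``mismatched'' factor ($c_* A_c^2$ for \eqref{claim2} and $cA_*^2$ for \eqref{claim4}) that is not controlled by any direct sumset bound paired with a natural pivot. My fix is a two-stage chain. First, I would bound $|Y + c_* A_c^2|$ (the corresponding step for \eqref{claim4} bounds $|Y + cA_*^2|$) by applying Lemma \ref{theorem:plunnecke} with pivot $X = c_* Z$ and the two sets $Y$ and $c_* A_c^2$: here $|c_* Z + Y| \ll K$ since $c_* Z \subseteq c_* A_*^2$ and $Y \subseteq A_*^1$, while $|c_* Z + c_* A_c^2| = |Z + A_c^2| \ll K_1$ via \eqref{claim1}, since $Z \subseteq A_c^2$. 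This yields $|Y + c_* A_c^2| \ll K K_1 / |Z|$. Second, I would apply Lemma \ref{theorem:plunnecke} once more with pivot $X = Y$ and the two sets $c_* A_c^2$ and $cA_c^2$, combining the fresh estimate with $|Y + cA_c^2| \ll K$ to get $|c_* A_c^2 + cA_c^2| \ll K^2 K_1 / (|Y||Z|)$; substituting the values of $K$, $K_1$, $|Y|$, $|Z|$ recovers $|A|^{59}|B|^{87}/T^{29}$ exactly, and \eqref{claim4} follows identically after swapping $A_c^2$ with $A_*^2$ throughout. The main obstacle is precisely this mismatch between scaling and index; it is overcome by the simultaneous largeness of $|Y|$ and $|Z|$ guaranteed by \eqref{intersection}, which gives just enough quantitative room to push two Pl\"unnecke-Ruzsa steps through.
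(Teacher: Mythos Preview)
Your argument is correct throughout: each step is a legitimate application of Lemma~\ref{theorem:plunnecke}, and the numerics check out. For \eqref{claim1} you do exactly what the paper does. For \eqref{claim2} your two-stage chain and the paper's are essentially the same computation with the two pivots $Y=A_c^1\cap A_*^1$ and $c_*Z=c_*(A_c^2\cap A_*^2)$ applied in the opposite order; both routes land on $K^2K_1/(|Y||Z|)=|A|^{59}|B|^{87}/T^{29}$.

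Where you genuinely diverge is on \eqref{claim3} and \eqref{claim4}. The paper builds these sequentially---\eqref{claim3} from \eqref{claim2} via the pivot $c_*Z$, and then \eqref{claim4} from \eqref{claim3} via the pivot $cZ$---paying a factor of $K_1/|Z|$ at each step. Your direct route for \eqref{claim3}, pivoting once on $Y$ with the pair $(c_*A_*^2,\,cA_c^2)$, gives $K^2/|Y|\ll |A|^{29}|B|^{42}/T^{14}$, which is dramatically stronger than the stated $|A|^{83}|B|^{132}/T^{44}$; and for \eqref{claim4} your symmetric argument gives $K^2K_1/(|Y||Z|)=|A|^{59}|B|^{87}/T^{29}$ rather than the paper's $|A|^{119}|B|^{177}/T^{59}$. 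The point you exploit---that $Y$ sits simultaneously in $A_c^1$ and $A_*^1$, so $|Y+c_*A_*^2|$ and $|Y+cA_c^2|$ are \emph{both} bounded by $K$---is available to the paper but not used, and it buys you quantitatively sharper inequalities for free. Since \eqref{claim4} feeds directly into the choice of $K$ in Lemma~\ref{theorem:intersection} later in Section~\ref{section:mainproof}, your improvement would in fact propagate to a better exponent in the final incidence bound.
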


\begin{claim}\label{theorem:coveringclaim}
There exists an integer $\Gamma$ with
\begin{equation}\Gamma \ll \frac{\left|A\right|^{48}\left|B\right|^{72}}{T^{24}} \label{gammabound}\end{equation}
such that given any $c \in \pm C$, $x \in \mathbb{F}_q$, and $D \subseteq A_*^2$, a constant proportion of $cD+x$ can be covered with $\Gamma$ translates of $A_*^1$
\end{claim}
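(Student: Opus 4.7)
}

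The plan is to apply the Covering Lemma (Lemma~\ref{theorem:covering}) with $B = cD + x$ and $C = A_*^1$, which immediately reduces the claim to an upper bound on the sumset $|cD + A_*^1|$. Because $D \subseteq A_*^2$ we have $|cD + A_*^1| \leq |cA_*^2 + A_*^1|$, and \eqref{size} supplies $|A_*^1| \gg T/(|A||B|^3)$; the target $\Gamma \ll |A|^{48}|B|^{72}/T^{24}$ will therefore follow from the cardinality estimate
\[
|cA_*^2 + A_*^1| \ll \frac{|A|^{47}|B|^{69}}{T^{23}},
\]
which equals the Plünnecke bound $(|A_*^2 + A_*^2|)^2/|A_*^2|$ on $|A_*^2 - A_*^2|$.

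To establish this sumset bound I would chain applications of the Plünnecke--Ruzsa inequality (Lemma~\ref{theorem:plunnecke}), pivoted on the two large intersections supplied by \eqref{intersection}: $X := A_c^1 \cap A_*^1 \subseteq A_*^1$ and $Y := A_c^2 \cap A_*^2 \subseteq A_*^2$, each of size $\gg T^4/(|A|^7|B|^{12})$. Since $X \subseteq A_c^1$ and $Y \subseteq A_c^2$, inclusion gives $|X + cA_c^2| \leq |A_c^1 + cA_c^2| \ll |A|^{11}|B|^{15}/T^5$ from \eqref{sumset}, while \eqref{claim1} gives $|X + A_*^1| \leq |A_*^1 + A_*^1|$ and $|Y + A_*^2| \leq |A_*^2 + A_*^2|$, each $\ll |A|^{23}|B|^{33}/T^{11}$. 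Lemma~\ref{theorem:plunnecke} with base $X$ and sets $A_*^1, cA_c^2$ produces a bound on $|A_*^1 + cA_c^2|$; a second Plünnecke--Ruzsa step based on $cY$ (or, equivalently, a Ruzsa triangle inequality pivoted on $cY$) then transfers this into a bound on $|A_*^1 + cA_*^2|$. The higher-order sumset bounds \eqref{claim2}--\eqref{claim4} are available as backup to absorb the scaling error when one has to pass from $c_*A_*^2$ to $cA_*^2$ as an intermediate pivot.

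The main obstacle I expect is the exponent bookkeeping. A naive chain of Plünnecke--Ruzsa applications overshoots the target by an extra power of the doubling constant $|A_*^2 + A_*^2|/|A_*^2|$, so the ordering of the steps must be chosen so that \eqref{claim1} contributes exactly two factors of $|A_*^2 + A_*^2|$ and the intersection lower bounds \eqref{intersection} cancel the $|X|$ and $|Y|$ appearing in the Plünnecke denominators without leaving any surplus. Once the sumset bound $|cA_*^2 + A_*^1| \ll |A|^{47}|B|^{69}/T^{23}$ is in hand, division by the size lower bound for $|A_*^1|$ from \eqref{size} yields the stated $\Gamma \ll |A|^{48}|B|^{72}/T^{24}$.
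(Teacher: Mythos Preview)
Your overall strategy --- apply the covering lemma and then control the resulting sumset ratio via Pl\"unnecke--Ruzsa with pivots $X=A_c^1\cap A_*^1$ and $Y=A_c^2\cap A_*^2$ --- is exactly the paper's strategy. The gap is in the very first move: you cover by $A_*^1$, whereas the paper covers by the \emph{smaller} set $X=A_c^1\cap A_*^1$.

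This matters for the exponents, and the problem is not fixable by reordering the Pl\"unnecke steps. With $C=A_*^1$ you must bound $|cA_*^2+A_*^1|/|A_*^1|$. Any Pl\"unnecke chain linking $|cA_*^2+A_*^1|$ back to the one available $c$-sumset $|A_c^1+cA_c^2|$ must expand $A_c^1$ to $A_*^1$ (one factor of $|A_*^1+A_*^1|/|X|$) \emph{and} expand $cA_c^2$ to $cA_*^2$ (one factor of $|A_*^2+A_*^2|/|Y|$). Carrying this through gives
\[
\frac{|cA_*^2+A_*^1|}{|A_*^1|}\;\ll\;\frac{|A_*^1+A_*^1|\,|A_*^2+A_*^2|\,|A_c^1+cA_c^2|}{|X|\,|Y|\,|A_*^1|}\;\ll\;\frac{|A|^{72}|B|^{108}}{T^{36}},
\]
which overshoots the target by $|A|^{24}|B|^{36}/T^{12}$. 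Your stated intermediate goal $|cA_*^2+A_*^1|\ll|A_*^2+A_*^2|^2/|A_*^2|$ is not something the available inputs produce: $A_*^1$ and $cA_*^2$ are genuinely different sets, so there is no route that avoids paying for \emph{both} expansions. (In particular, the remark that \eqref{claim1} should contribute ``exactly two factors'' points the wrong way --- the correct bound uses only one.)

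The paper's trick is that since $X\subseteq A_*^1$, every translate of $X$ sits inside a translate of $A_*^1$; so one may as well cover $cD+x$ by translates of $X$ and count those. The covering lemma then yields
\[
\Gamma\;\ll\;\frac{|cA_*^2+X|}{|X|},
\]
and now a \emph{single} Pl\"unnecke step with base $cY$ suffices:
\[
|cA_*^2+X|\;\le\;\frac{|cA_*^2+cY|\,|X+cY|}{|Y|}\;\le\;\frac{|A_*^2+A_*^2|\,|A_c^1+cA_c^2|}{|Y|}.
\]
Dividing by $|X|$ and inserting \eqref{sumset}, \eqref{intersection}, \eqref{claim1} gives exactly $\Gamma\ll|A|^{48}|B|^{72}/T^{24}$.
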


\subsection{Proof of Proposition \ref{theorem:above}, assuming claims}\label{section:mainproof}

Apply Lemma \ref{theorem:intersection} with $X=A_*^2$, $Y = \frac{1}{c_*}C$ and, by inequality \eqref{claim4}, $K\ll\frac{\left|A\right|^{119}\left|B\right|^{177}}{T^{59}}$. This provides $a_1,a_2,a_3 \in A_*^2$ such that 
\begin{align*}
\left|\left(A_*^2-a_1\right) \cap \left(\frac{a_2-a_3}{c_*}\right)C\right|\gg \frac{\left|A_*^2\right|\left|B_2'\right|}{K}
\gg\frac{T^{65}}{\left|A\right|^{130}\left|B\right|^{194}} 
\end{align*}
For convenience, define $Z=\left(A_*^2-a_1\right) \cap \left(\frac{a_2-a_3}{c_*}\right)C$, to give the lower bound
\begin{equation}\label{zbound}
\left|Z\right|\gg \frac{T^{65}}{\left|A\right|^{130}\left|B\right|^{194}} 
\end{equation}
We seek an upper bound for $\left|Z\right|$ with which to compare \eqref{zbound}. There are three possible cases: 
 
\begin{enumerate}

\item $\mathbf{R(Z)}$\textbf{ is not closed under multiplication}. By Lemma \ref{theorem:pivot1} there are then elements $c_1,c_2,d_1,d_2,d_3,d_4 \in C$ such that for every $Z' \subseteq Z$ with $\left|Z'\right|\gg\left|Z\right|$ we have $$\left|Z\right|^2 \ll \left|c_1(d_1-d_2)Z'-c_2(d_1-d_2)Z'+c_1(d_3-d_4)Z'\right|$$

\item $\mathbf{R(Z)}$ \textbf{is closed under multiplication but is not closed under addition}. By Lemma \ref{theorem:pivot1} there are then elements $c_1,c_2,c_3,z_4 \in C$ such that for every $Z' \subseteq Z$ with $\left|Z'\right|\gg\left|Z\right|$ we have $$\left|Z\right|^2 \ll \left|(c_1-c_2)Z'+(c_1-c_2)Z'+(c_3-c_4)Z'\right|$$

\item \textbf{$\mathbf{R(Z)}$ is a field}, $G$ say. Lemma \ref{theorem:ratiofield} implies that in this case we have $Z \subseteq aG+b$ for some $a,b \in F$. So, collecting together various facts, we have

\begin{itemize}
\item $Z \subseteq A_*^2-a_1$.
\item $A_*^2$ is a $\left(1,\lambda\right)$-antifield, and therefore so is $A_*^2-a_1$.
\item $Z \subseteq aG+b$ for some $a,b \in F$.
\item $\left|Z\right|\gg \frac{T^{65}}{\left|A\right|^{130}\left|B\right|^{194}}$.
\end{itemize}

So for some $\lambda \approx \frac{T^{65}}{\left|A\right|^{130}\left|B\right|^{194}}$, the definition of a $(2,\lambda)$-antifield implies that $\left|Z\right|\leq \left|G\right|^{\frac{1}{2}}=\left|R(Z)\right|^{\frac{1}{2}}$. Lemma \ref{theorem:pivot2} then implies that there are elements $c_1,c_2,c_3,c_4 \in C$ such that for every $Z' \subseteq Z$ with $\left|Z'\right|\gg\left|Z\right|$ we have
$$\left|Z\right|^2 \ll \left|(c_1-c_2)Z'+(c_3-c_4)Z'\right| $$
\end{enumerate}

\subsubsection{Dealing with Case 1}

Given any  $Z' \subseteq Z$ with $\left|Z'\right|\gg\left|Z\right|$ and any $E \subseteq A_*^2$ with $\left|E\right|\gg \left|A_*^2\right|$, apply Lemma \ref{theorem:plunnecke} with $X=c_1(d_1-d_2)E$ and $k=3$ to get

\begin{align*}
\left|Z\right|^2 &\ll \left|c_1(d_1-d_2)Z'-c_2(d_1-d_2)Z'+c_1(d_3-d_4)Z'\right|\\& \ll \frac{\left|E+Z'\right|\left|c_1E-c_2Z'\right|\left|d_1E-d_2E+d_3Z'-d_4Z'\right|}{\left|A_*^2\right|^2}
\end{align*}

By definition of $\Gamma$ from Claim \ref{theorem:coveringclaim}, there is a subset $S_1 \subseteq A_*^2$ with $\left|S_1\right|\gg \left|A_*^2\right|$ such that $d_1 S_1$ can be covered with $\Gamma$ copies of $A_*^1$. Further, there is a subset $S_2 \subseteq S_1$ with  $\left|S_2\right|\gg\left|S_1\right|\gg \left|A_*^2\right|$ such that $-d_2 S_2$ can be covered with $\Gamma$ copies of $A_*^1$. And there is a subset $S_3 \subseteq S_2$ with $\left|S_3\right|\gg\left|A_*^2\right|$ such that $c_1 S_3$ can be covered with $\Gamma$ copies of $A_*^1$. Set $E=S_3$, so that $d_1E$, $-d_2E$ and $c_1E$ can be covered with $\Gamma$ copies of $A_*^1$ each. 

Similarly, recall that $Z \subseteq A_*^2 - a_1$, and pick $Z' \subseteq Z$ with $\left|Z'\right|\gg\left|Z\right|$ such that $d_3Z'$,$-d_4Z'$ and $-c_2Z'$ can each be covered with $\Gamma$ copies of $A_*^1$ each. Altogether, this means that:  

\begin{align*}
\left|Z\right|^2 &\ll \frac{\Gamma^6 \left|E+Z'\right|\left|A_*^1+A_*^1\right| \left|A_*^1+A_*^1+A_*^1+A_*^1\right|}{\left|A_*^2\right|^2}\\
&\leq \frac{\Gamma^6 \left|A_*^2+A_*^2\right|\left|A_*^1+A_*^1\right| \left|A_*^1+A_*^1+A_*^1+A_*^1\right|}{\left|A_*^2\right|^2}
\end{align*}

Lemma \ref{theorem:plunnecke} and the bound in Claim \ref{theorem:coveringclaim} then give

\begin{align*}
\left|Z\right|^2 &\ll \frac{\Gamma^6 \left|A_*^2+A_*^2\right|\left|A_*^1+A_*^1\right|  \left|A_*^1+c_* A_*^2\right|^4}{ \left|A_*^2\right|^5}\ll\frac{\left|A\right|^{383}\left|B\right|^{573}}{T^{191}}
\end{align*}

Comparing with \eqref{zbound} gives $T \ll \left|A\right|^{\frac{643}{321}}\left|B\right|^{\frac{961}{321}}$, which satisfies the bound in the statement of the proposition.

\subsubsection{Dealing with Case 2}

Given any any $Z' \subseteq Z$ with $\left|Z'\right|\gg \left|Z\right|$ and any $E \subseteq A_*^2$ with $\left|E\right|\gg\left|A_*^2\right|$ we can apply Lemma \ref{theorem:plunnecke} with $X=\left(c_1-c_2\right)E$ and $k=2$ to get

\begin{align*}
\left|Z\right|^2 & \ll \left|(c_1-c_2)Z'+(c_1-c_2)Z'+(c_3-c_4)Z'\right|\\
&\ll \frac{\left|E+Z'+Z'\right|\left|c_1E-c_2E+c_3Z'-c_4Z'\right|}{\left|A_*^2\right|}\\
&\leq \frac{\left|A_*^2+A_*^2+A_*^2\right|\left|c_1 E-c_2E+c_3Z'-c_4Z'\right|}{\left|A_*^2\right|}
\end{align*}

As in Case 1, pick $Z'$ and $E$ so that: 

$$\left|Z\right|^2\ll \frac{\Gamma^4 \left|A_*^2+A_*^2+A_*^2\right|\left|A_*^1+A_*^1+A_*^1+A_*^1\right|}{\left|A_*^2\right|} $$
Lemma \ref{theorem:plunnecke} then gives:
\begin{align*}
\left|Z\right|^2&\ll \frac{\Gamma^4 \left|A_*^1+c_*A_*^2\right|^7}{\left|A_*^1\right|^2 \left|A_*^2\right|^4}\ll\frac{\left|A\right|^{275}\left|B\right|^{411}}{T^{137}}
\end{align*}
\par

Comparing with \eqref{zbound} gives $T \ll \left|A\right|^{\frac{535}{267}}\left|B\right|^{\frac{799}{267}}$,which satisfies the bound in the statement of the proposition.

\subsubsection{Dealing with Case 3}

As with Cases 1 and 2, pick $Z'$ so that

$$\left|Z\right|^2 \ll \left|(c_1-c_2)Z'+(c_3-c_4)Z'\right| \leq \Gamma^4 \left|A_*^1+A_*^1+A_*^1+A_*^1\right|$$

Then Lemma \ref{theorem:plunnecke} gives 
\begin{align*}
\left|Z\right|^2&\ll \frac{\Gamma^4\left|A_*^1+c_*A_*^2\right|^4}{\left|A_*^2\right|^3}\ll\frac{\left|A\right|^{239}\left|B\right|^{357}}{T^{119}}
\end{align*}
Comparing with \eqref{zbound} gives $T \ll \left|A\right|^{\frac{499}{249}}\left|B\right|^{\frac{743}{249}}$, which satisfies the bound in the statement of the proposition.

The proof of the proposition is therefore complete, subject to the proofs of Claims \ref{theorem:bsgclaim}, \ref{theorem:sumsetclaim} and \ref{theorem:coveringclaim}, which are given below.

\subsection{Proof of Claim \ref{theorem:bsgclaim}}

Every point in $P$ is the intersection of a horizontal line in $L_B$ (with $y$-co-ordinate lying in $B$) and a line through the origin in $L_A$ (with gradient lying in $A$). Denote the lines in $L_B$ by $h_b$ for each $b \in B$ and the lines in $L_A$ by $d_a$ for each $a \in A$. Furthermore, for each $b \in B$ define the set $X_b \subseteq F$ by 

$$X_b= \left\{x:(x,b) \in h_b\cap P \right\}$$

Note that $X_b$ is a $(1,\lambda)$-antifield for each $b \in B$ as it is contained in the $(1,\lambda)$-antifield $\left\{x:(x,y) \in P\right\}$

Now, the set of lines $L(P)$ and the set of points $P$ generate $T$ colinear triples. So, by averaging, there are two distinct elements $b_1,b_2 \in B$ such that there are $\frac{T}{|B|^2}$ colinear triples $(p_1,p_2,p_3) \in P \times P \times P$ with $p_1 \in h_{b_1}$ and $p_2 \in h_{b_2}$. 

By Lemma \ref{theorem:popularity} there is then a set $B' \subseteq B$ with $\left|B'\right|\gg \frac{T}{\left|A\right|^2 \left|B\right|^2}$ such that, for each $b \in B'$, there are $\gg \frac{T}{\left|B\right|^3}$ colinear triples $(p_1,p_2,p_3) \in P \times P \times P$ with $p_1 \in h_{b_1}$, $p_2 \in h_{b_2}$ and $p_3 \in h_b$. 

This is the same as saying that for each $b \in B'$ there are $\gg \frac{T}{\left|B\right|^3}$ elements $x_1 \in X_{b_1}$ and $x_2 \in X_{b_2}$ for which $$x_1 \left(1-\frac{b-b_1}{b_2-b_1}\right)+x_2 \left(\frac{b-b_1}{b_2-b_1}\right)\in X_b $$

So for each $b \in B'$, we can apply the Balog-Szemeredi-Gowers theorem (Lemma $\ref{theorem:BSG}$) with $X=\left(1-\frac{b-b_1}{b_2-b_1}\right)X_{b_1}$, $Y=\frac{b-b_1}{b_2-b_1} X_{b_2}$, $n=|A|$, $G=\left\{(x_1,x_2)\in X_{b_1} \times X_{b_2}: x_1 \left(1-\frac{b-b_1}{b_2-b_1}\right)+x_2 \left(\frac{b-b_1}{b_2-b_1}\right)\in X_b \right\}$ and $\alpha=\frac{T}{\left|A\right|^2\left|B\right|^3}$ to find subsets $A_b^1 \subseteq X_{b_1}$  and $A_b^2 \subseteq X_{b_2}$ with

\begin{itemize}
\item $\left|A_b^1 + \left(\frac{b_1-b_2}{b_2-b}-1\right)A_b^2\right|=\left|(1-\frac{b-b_1}{b_2-b_1})A_b^1 + \frac{b-b_1}{b_2-b_1}A_b^2\right|\ll \frac{\left|A\right|^{11} \left|B\right|^{15} }{T^5}$
\item $\left|A_b^1\right|,\left|A_b^2\right| \gg \frac{T}{\left|A\right|\left|B\right|^3}$
\end{itemize}

Moreover, note that $A_b^1$ and $A_b^2$ are both $(1,\lambda)$-antifields for each $b \in B'$ as they are contained in the $(1,\lambda)$-antifields $X_{b_1}$ and $X_{b_2}$ respectively.

By dropping at most one element we may assume that $b_2 \notin B'$. Now let $C'=\left\{\frac{b_1-b_2}{b_2-b}-1:b \in B'\right\}$
and note that the map $b \mapsto \frac{b_1-b_2}{b_2-b}-1$ is a bijection. Define sets $A_c^1, A_c^2$ by $A_c^i=A_{b(c)}^i$ for each $c \in C'$. Then we have 

\begin{itemize}
\item $\left|C'\right|=\left|B'\right|\gg \frac{T}{\left|A\right|^2 \left|B\right|^2}$
\item $\left|A_c^1 + c A_c^2\right| \ll \frac{\left|A\right|^{11} \left|B\right|^{15} }{T^5}$ for each $c \in C'$
\item $\left|A_c^1\right|,\left|A_c^2\right| \gg \frac{T}{\left|A\right|\left|B\right|^3}$ for each $c \in C'$
\end{itemize}

Let $P_c=A_c^1 \times A_c^2$, so that $\left|P_c\right|\gg\frac{T^2}{\left|A\right|^2 \left|B\right|^6}$ for each $c \in C'$. Cauchy-Schwartz implies that:

\begin{align*}
\left|C'\right|\frac{T^2}{\left|A\right|^2 \left|B\right|^6}\ll \sum_{c \in C'}\left|P_c\right|
\leq \left|A\right|\sqrt{\sum_{c,c' \in C'}\left|P_c \cap P_{c'}\right|}
\end{align*}

So there is a particular element $c^*\in C'$ such that
$$\sum_{c \in C'}\left|P_c \cap P_{c^*} \right|\gg \left|C'\right|\frac{T^4}{\left|A\right|^6\left|B\right|^{12}}\gg \frac{T^5}{\left|A\right|^8\left|B\right|^{14}}$$

Lemma \ref{theorem:popularity} then yields a subset $C \subseteq C'$ such that
\begin{itemize}
\item $\left|P_c \cap P_{c^*}\right|\gg \frac{T^4}{\left|A\right|^{6}\left|B\right|^{12}}$ for all $c \in C$
\item $\left|C\right|\gg \frac{T^5}{\left|A\right|^{10}\left|B\right|^{14}}$
\end{itemize}

Note that $\left|P_c \cap P_{c^*}\right|=\left|A_c^1 \cap A_{c^*}^1\right|\left|A_c^2 \cap A_{c^*}^2\right|$ to see that 
$$\left|A_c^1 \cap A_{c^*}^1\right|,\left|A_c^2 \cap A_{c^*}^2\right|\gg \frac{T^4}{\left|A\right|^7\left|B\right|^{12}} $$
for each $c \in C$. This completes the proof of the claim.

\subsection{Proof of Claim \ref{theorem:sumsetclaim}}

The claim is proved by repeated application of Lemma \ref{theorem:plunnecke} and inequalities \eqref{size}, \eqref{sumset} and  \eqref{intersection}:

\subsubsection{Proof of \eqref{claim1}}

Lemma \ref{theorem:plunnecke} and the inequalities \eqref{size} and \eqref{sumset} imply that
\begin{align*}
\left|A_c^1+A_c^1\right|\leq \frac{\left|A_c^1 + c A_c^2  \right|^2}{\left|A_c^2\right|}
\ll \frac{\left|A\right|^{23}\left|B\right|^{33}}{T^{11}}
\end{align*}
Similarly for $\left|A_c^2+A_c^2\right|$, which completes the proof of \eqref{claim1}.

\subsubsection{Proof of \eqref{claim2}}
Lemma \ref{theorem:plunnecke}, and inequalities \eqref{intersection} and \eqref{claim1}, imply that 
\begin{align*}
\left|c_*A_c^2 + cA_c^2\right|&\leq \frac{\left|c_*A_c^2+c_*\left(A_c^2 \cap A_*^2\right)\right|\left|cA_c^2+c_*\left(A_c^2 \cap A_*^2\right)\right|}{\left|A_c^2 \cap A_*^2\right|}\\
&\ll \frac{\left|A_c^2+A_c^2\right|}{\left|A_c^2\cap A_*^2\right|}\left|cA_c^2+c_*\left(A_c^2 \cap A_*^2\right)\right|\\
&\ll \frac{\left|A\right|^{30}\left|B\right|^{45}}{T^{15}}\left|cA_c^2+c_*\left(A_c^2 \cap A_*^2\right)\right|
\end{align*}

Now apply Lemma \ref{theorem:plunnecke} again, with \eqref{sumset} and \eqref{intersection}, to see that 

\begin{align*}
\left|cA_c^2+c_*\left(A_c^2 \cap A_*^2\right)\right|&\ll \frac{\left|\left(A_c^1\cap A_*^1\right)+cA_c^2 \right|\left|c_*\left(A_c^2 \cap A_*^2 \right)+\left(A_c^1 \cap A_*^1 \right) \right|}{\left|A_c^1 \cap A_*^1\right|}\\
&\leq \frac{\left|A_c^1+cA_c^2\right|\left|A_*^1+c_*A_*^2\right|}{\left|A_c^1 \cap A_*^1\right|}\\
& \ll \frac{\left|A\right|^{29}\left|B\right|^{42}}{T^{14}}
\end{align*}
which completes the proof of \eqref{claim2}

\subsubsection{Proof of \eqref{claim3}}
Lemma \ref{theorem:plunnecke}, and inequalities \eqref{sumset}, \eqref{intersection}, \eqref{claim1} and \eqref{claim2}, imply that: 
\begin{align*}
\left|c_*A_*^2+cA_c^2\right|&\leq \frac{\left|c_*A_*^2+c_* \left(A_c^2 \cap A_*^2\right)\right|\left|cA_c^2+c_* \left(A_c^2 \cap A_*^2\right)\right|}{\left|A_c^2 \cap A_*^2\right|}\\
&\leq\frac{\left|A_*^2 + A_*^2\right|\left|c_*A_c^2+cA_c^2\right|}{\left|A_c^2 \cap A_*^2\right|}\\
& \ll \frac{\left|A\right|^{89}\left|B\right|^{132}}{T^{44}}
\end{align*}
which completes the proof of \eqref{claim3}

\subsubsection{Proof of \eqref{claim4}}
Lemma \ref{theorem:plunnecke}, and inequalities \eqref{sumset}, \eqref{intersection}, \eqref{claim1} and \eqref{claim3}, imply that 
\begin{align*}
\left|c_*A_*^2+cA_*^2\right|&\ll \frac{\left|c_*A_*^2+c\left(A_c^2 \cap A_*^2\right)\right|\left|cA_*^2+c\left(A_c^2 \cap A_*^2\right)\right|}{\left|A_c^2 \cap A_*^2 \right|}\\
&\leq \frac{\left|c_*A_*^2 + cA_c^2\right|\left|A_*^2+A_*^2\right|}{\left|A_c^2 \cap A_*^2 \right|}\\
&\ll \frac{\left|A\right|^{119}\left|B\right|^{177}}{T^{59}}
\end{align*}
This completes the proof of \eqref{claim4}, and therefore of the whole claim.

\subsection{Proof of Claim \ref{theorem:coveringclaim}}
Given $D \subseteq A_*^2$, $x \in \mathbb{F}_q$ and $c \in C$, use the covering lemma (Lemma \ref{theorem:covering}) to cover a constant proportion of $cD+x$ with 
$$\frac{\left|cD+\left(A_c^1 \cap A_*^1\right)\right|}{\left|A_c^1 \cap A_*^1\right|}\leq \frac{\left|cA_*^2+\left(A_c^1 \cap A_*^1\right)\right|}{\left|A_c^1 \cap A_*^1\right|}$$
 translates of $A_c^1 \cap A_*^1$, and hence with the same number of translates of $A_*^1$. Lemma \ref{theorem:plunnecke} and the inequalities \eqref{sumset},\eqref{intersection} and \eqref{claim1} then give:
 
\begin{align*}
\frac{\left|cA_*^2+\left(A_c^1 \cap A_*^1\right)\right|}{\left|A_c^1 \cap A_*^1\right|}&\ll \frac{\left|cA_*^2+c\left(A_c^2 \cap A_*^2\right)\right|\left|\left(A_c^1 \cap A_*^1\right)+c\left(A_c^2 \cap A_*^2\right)\right|}{\left|A_c^1 \cap A_*^1\right|\left|A_c^2 \cap A_*^2\right|}\\
&\leq 
\frac{\left|A_*^2+ A_*^2\right|\left|A_c^1 +cA_c^2 \right|}{\left|A_c^1 \cap A_*^1\right|\left|A_c^2 \cap A_*^2\right|}\\
&\ll \frac{\left|A\right|^{48}\left|B\right|^{72}}{T^{24}}
\end{align*}

The proof is similar when $c \in -C$. This completes the proof of the claim.

\section*{Acknowledgements}
The author is grateful to Oliver Roche-Newton and Misha Rudnev for useful discussions and for pointing out various typographical errors in earlier drafts, and to Nick Gill for asking some awkward questions.

\bibliographystyle{plain}
\bibliography{Incidencesbibliography}

\end{document}